\numberwithin{equation}{section} 
\theoremstyle{plain}
\newtheorem{theorem}{Theorem}[section]
\newtheorem{lemma}[theorem]{Lemma}
\newtheorem{proposition}[theorem]{Proposition}
\theoremstyle{definition}
\newtheorem{definition}[theorem]{Definition}
\newtheorem{remark}[theorem]{Remark}
\begin{document}
\title{\bf{ 
Bulk universality for deformed GinUEs
}}

\author{Lu Zhang\footnotemark[1]}
\renewcommand{\thefootnote}{\fnsymbol{footnote}}
\footnotetext[1]{School of Mathematical Sciences, University of Science and Technology of China, Hefei 230026, P.R.~China. E-mail: zl123456@mail.ustc.edu.cn}


 \maketitle
 \begin{abstract}
For the deformed complex Ginibre ensemble with a mean  normal matrix, under certain assumptions on the mean matrix we prove that the same bulk statistics holds as in the complex Ginibre matrix bulk. This is  the continuation of the previous joint papers \cite{LZ23} and \cite{LZ24}, which deal with local eigenvalue statistics at the edge.

 \end{abstract}

\section{Introduction and main results}

\subsection{Introduction}
The deformed complex Ginibre ensembles are defined as follows.
 \begin{definition}  \label{GinU} 
A random  complex  
$N\times N$ matrix  $X$, is said to belong to the deformed complex  Ginibre ensemble with mean matrix $X_0={\rm diag}\left( A_0,0_{N-r} \right)$ and  time parameter $\tau>0$, denoted by GinUE$_{N}(A_0)$,  if  the joint probability density function for  matrix entries  is given by   
\begin{equation}\label{model}
P_{N}(A_0;X)=    \Big(\frac{N}{\pi\tau}\Big)^{N^2}\
e^{ -\frac{N}{\tau} {\rm Tr} (X-X_0)(X-X_0)^*}.
\end{equation}
 Here $A_0$ is a $r\times r$ constant  matrix. 

\end{definition}
The study of the deformed model \eqref{model}  is motivated mostly
from the effort toward the understanding of the effect of perturbations on the spectrum of a large-dimensional random matrix, see e.g. \cite{BC16}. 
In the non-perturbed case,  the study of  non-Hermitian random matrices  was first initiated  by  Ginibre  \cite{Gi}  for those  matrices with i.i.d. real/complex standard  Gaussian entries, and then was extended to  i.i.d.  entries. At a macroscopic  level, the limit spectra  measure was the famous \textit{circular law}, which  is   a uniform measure on the unit disk, see for e.g. a recent survey \cite{BC12} and references therein for more details. At a microscopic level,  local  eigenvalue statistics  in the bulk and near the edge  of the spectrum has already many tremendous results, see e.g. \cite{TV15}, \cite{CES} and very recently \cite{MO}.

Although the deformed GUEs  or the general deformed Hermitian models   have  been  
quite well understood (see e.g. \cite{CP16} and references therein),  not much is known about   the  local  eigenvalue statistics  in the non-Hermitian situation. Also, for the deformed model  \eqref{GinU},    the  circular law still holds  under a  small low rank perturbation from $X_0$, see   \cite[Corollary 1.12]{TV10} as the culmination of work by many authors and a survey \cite{BC12} for further details. For general $X_0$ with infinite rank but certain limiting behaviour,  the limit spectra  measure no longer follows the circular law, while the boundary curve of its support can still satisfy good parametric equation, see e.g. \eqref{supp mu} or \cite[Proposition 1.2]{BC16}. These curves can exhibit different geometries from the circle, like degeneracy, connectedness and etc.
 
Our goal is to   identify all the possible local  eigenvalue statistics at the edge and in the bulk  of the spectrum, for the deformed model  \eqref{GinU} with mean normal matrix $X_0$ under certain restriction \eqref{A0 form}  on $A_0$ which has infinite rank. The series are divided in three papers. 
This is the third paper, which investigates the local eigenvalue statistics near the bulk point of the large $N$ limit spectrum distribution of $X$. In contrast, the first two papers \cite{LZ23} and \cite{LZ24} investigates the local eigenvalue statistics near the edge point, which is further classified to be whether critical or not, according to its degeneracy at the boundary curve.

It is a classical result that for complex Ginibre unitary ensembles, i.e. model \eqref{GinU} with zero mean, local eigenvalue statistics near the bulk point forms a class of determinantal point processes, with correlation kernels $K(z,w)=\frac{1}{\pi} e^{-(|z|^2+|w|^2)/2+z\overline{w}}$, see e.g. \cite{Me}. In \cite{TV15} Tao and Vu generalise this result to more general complex random matrix, which matches moments with complex Ginibre unitary ensembles to fourth order and satisfies other regular conditions. Very recently in \cite{MO}, Maltsev and Osman proved the Ginibre bulk statistics both for model \eqref{GinU} with certain restrictions on $X_0$ in the local law setting and a rather general ensembles of random complex matrix with i.i.d. entries with finite moments  such that the real and imaginary parts are independent. Our present paper will show that under certain restriction \eqref{A0 form} on the mean matrix $X_0$ with infinite rank, the large $N$ limit local eigenvalue statistics for model \eqref{GinU} is still governed by the Ginibre bulk statistics.

\subsection{Main results}

Throughout the present  paper, we select $A_0$ as a normal matrix with the following form
\begin{small}
\begin{equation}\label{A0 form}
A_0={\rm diag}\left(a_1\mathbb{I}_{r_1},\cdots,a_t\mathbb{I}_{r_t},
z_0\mathbb{I}_{r_0},A_{t+1}
\right),
\end{equation}
\end{small} 
where $t$ is a given non-negative integer,  $z_0$ is  a spectral parameter and   $A_{t+1}$ is a normal $r_{t+1}\times r_{t+1}$ matrix with $z_0$  not an  eigenvalue.
 Put $R_0=N-r$ where  $r=\sum_{\alpha=0}^{t+1} r_{\alpha}$, we assume that    
 $r_0$, $r_{t+1}$ and $R_0$ are  finite, independent of $N$, and  as $N\to \infty$
\begin{small}
 \begin{equation}\label{ralpha N}
r_{\alpha}=c_{\alpha}N+R_{\alpha,N},
\quad R_{\alpha,N}=O(1), \quad \alpha=1,\cdots,t.
\end{equation}
\end{small}
So we have 
\begin{small}
\begin{equation}\label{calpha sum}
\sum_{\alpha=1}^t  c_{\alpha}=1.
\end{equation}
\end{small}

For given complex numbers $a_1,\cdots,a_t$,  introduce  a  probabililty measure  on the complex plane 
\begin{small}
\begin{equation*}
d\nu(z) =    \sum_{\alpha=1}^t c_{\alpha} \delta(z-a_{\alpha}),
\end{equation*}
\end{small}
and a relevant expectation
\begin{small}
\begin{equation}\label{parameter}
P_{00}(z_0):= \int  \frac{1}{|z-z_0|^2} d\nu(z), 
 \end{equation}
\end{small}
the support of limit spectral measure $\mu_{\infty}$ is
\begin{small}
\begin{equation}\label{supp mu} 
\mathrm{Supp}(\mu_{\infty}):=\Big\{z_0\in \mathbb{C}: P_{00}(z_0)\geq \frac{1}{\tau} \Big\};
\end{equation}
\end{small}
see e.g.  \cite[Proposition 1.2]{BC16}. In this paper we consider the bulk point $z_0$, that is $P_{00}(z_0)> \frac{1}{\tau}$. Then there exists a unique positive $t_0$, such that
\begin{small}
\begin{equation}\label{bulkparameter}
\sum_{\alpha=1}^t \frac{\tau c_{\alpha}}
{|a_{\alpha}-z_0|^2+t_0}=1.
\end{equation}
\end{small}
Further introduce two relevant expectations
\begin{small}
\begin{equation}\label{parameter2}
  P_0:=P_{0}(z_0)=  
  \int  \frac{z-z_0}{(|z-z_0|^2+t_0)^2} d\nu(z), \quad 
   P_1:=P_{1}(z_0)=  
  \int  \frac{1}{(|z-z_0|^2+t_0)^2} d\nu(z)
\end{equation}
\end{small}
These  quantities  are important  to characterize the rescaling of the spectral points which are parameters in the n-point correlation functions.

Now, we are ready to state our main results.
\begin{theorem}\label{2-complex-correlation}
For the $ {\mathrm{GinUE}}_{N}(A_0)$ ensemble with $R_0\geq n$ and the assumptions  
\eqref{A0 form} on $A_0$, if $z_0$ is a bulk point which satisfies $a_{\alpha}\not=z_0$ for $\alpha=1,\cdots,t$ and $\sum_{\alpha=1}^t
\frac{\tau c_{\alpha}}{|z_0-a_{\alpha}|^2+t_0}=1$ with $t_0>0$, then 
as $N\to \infty$ scaled eigenvalue correlations  hold uniformly for   all 
$\hat{z}_{1}, \ldots, \hat{z}_{n} $
in a compact subset of $\mathbb{C}$. 
\begin{small}
\begin{multline} \label{corre2edgecomplex noncritical}
\frac{1}{N^n\Big(  
t_0P_1+\frac{|P_0|^2}{P_1}
 \Big)^n}
R_N^{(n)}\bigg(
A_0;z_0+\Big( N\Big(  
t_0P_1+\frac{|P_0|^2}{P_1}
 \Big) \Big)^{-\frac{1}{2}}\hat{z}_1
,\cdots,
z_0+\Big( N\Big(  
t_0P_1+\frac{|P_0|^2}{P_1}
 \Big) \Big)^{-\frac{1}{2}}\hat{z}_n
\bigg)\\
=
\det\left(
\Big[
\frac{1}{\pi}e^{-\frac{1}{2}
|\hat{z}_i|^2-\frac{1}{2}
|\hat{z}_j|^2+\hat{z}_i\overline{\hat{z}_j}
}
\Big]_{i,j=1}^n
\right)
+O\big( N^{-\frac{1}{2}} \big).
\end{multline}
\end{small}

\end{theorem}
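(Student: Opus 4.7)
The plan is to build on the multi-contour integral representation of the correlation kernel developed in the earlier papers of this series, \cite{LZ23} and \cite{LZ24}, and to perform a saddle point analysis tailored to the bulk regime. Since the $n$-point correlation functions admit the determinantal structure $R_N^{(n)}(A_0;z_1,\ldots,z_n) = \det\bigl[K_N(z_i,z_j)\bigr]_{i,j=1}^n$ for a suitable correlation kernel $K_N(z,w)$, it suffices to establish the pointwise asymptotic of $K_N$ at the rescaled bulk points and then invoke the determinantal identity. Setting $\lambda_N := N\bigl(t_0 P_1 + |P_0|^2/P_1\bigr)$, the goal is to show that $\lambda_N^{-1} K_N\bigl(z_0 + \hat z/\sqrt{\lambda_N},\, z_0+\hat w/\sqrt{\lambda_N}\bigr)$ converges, up to a gauge factor of the form $e^{f(\hat z)-\overline{f(\hat w)}}$ that is invisible inside the determinant, to $\pi^{-1}\exp\bigl(-\tfrac12|\hat z|^2-\tfrac12|\hat w|^2+\hat z\,\overline{\hat w}\bigr)$ with error $O(N^{-1/2})$ uniform on compact sets.

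The first step is to express $K_N$ as a multi-contour integral using the Gaussian weight \eqref{model} and the block structure \eqref{A0 form}. The finite-rank blocks $z_0\mathbb{I}_{r_0}$, $A_{t+1}$, and the $R_0\times R_0$ zero block produce only $O(1/N)$ corrections to the effective action, so at leading order the integrand carries a large-$N$ exponential factor $e^{N\Phi(\cdot\,;z,w)}$ whose effective action is built from a bilinear term coming from the Gaussian weight and a logarithmic contribution
$$\sum_{\alpha=1}^t c_\alpha\,\log\bigl((u-a_\alpha)(\bar v-\bar a_\alpha)\bigr)$$
arising from the block multiplicities \eqref{ralpha N}. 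Differentiating in the contour variables at $z=w=z_0$ and eliminating auxiliary parameters by taking modulus, one recovers precisely the bulk saddle equation \eqref{bulkparameter}, whose unique positive solution under $P_{00}(z_0)>1/\tau$ provides $t_0$; the critical contours lie on circles centered at $z_0$ with radius $\sqrt{t_0}$, which mirrors the $|a_\alpha-z_0|^2+t_0$ denominators.

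The second step is the steepest descent computation at this saddle. Expanding $\Phi$ to second order in the transverse fluctuations produces a Gaussian integral whose covariance matrix has entries naturally expressible in terms of the moments $P_0$ and $P_1$ from \eqref{parameter2}; the determinant of this covariance reproduces the inverse scale $1/(t_0 P_1+|P_0|^2/P_1)$ and explains the choice of $\lambda_N$. The $(\hat z,\hat w)$-dependence of the phase enters only through the bilinear piece $-\frac{1}{\tau}(z-u)(\bar w-\bar v)$ evaluated at the saddle, whose expansion to order $1/\lambda_N$ produces exactly the quadratic form $-\tfrac12|\hat z|^2-\tfrac12|\hat w|^2+\hat z\,\overline{\hat w}$ along with gauge terms that are separable and hence invisible to the determinant. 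Combining these asymptotics gives the Ginibre bulk kernel, and \eqref{corre2edgecomplex noncritical} follows immediately, with the error $O(N^{-1/2})$ propagating cleanly from the Laplace expansion through the determinantal identity.

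The main obstacle, as in \cite{LZ23} and \cite{LZ24}, is the geometric step of choosing descent contours that pass through the correct saddle, separate the spectral points $z,w$ from the eigenvalues $\{a_\alpha\}$, and lie in regions where $\mathrm{Re}\,\Phi$ decays in the transverse directions. In the bulk regime the strict positivity of $t_0$ and of $P_1$ guarantees a non-degenerate Hessian, which makes this substantially easier than the critical edge analysis treated earlier; nevertheless uniform control of the Laplace expansion for $(\hat z,\hat w)$ ranging over a compact set, together with a careful check that the corrections from $z_0\mathbb{I}_{r_0}$, $A_{t+1}$, and the $R_0$ zero block enter only as $O(1/N)$ in the phase and $O(1)$ multiplicative factors in the amplitude, still demands care, and is where most of the technical work of the proof will lie.
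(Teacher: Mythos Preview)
Your proposal rests on an assumption that is not available here: the eigenvalue process of the deformed GinUE with a non-trivial normal mean matrix $X_0$ is \emph{not} known to be determinantal, so there is no correlation kernel $K_N$ with $R_N^{(n)}=\det[K_N(z_i,z_j)]_{i,j=1}^n$ to which one could apply steepest descent. The earlier papers \cite{LZ23,LZ24} in this series do not supply a contour-integral formula for a kernel either; their starting point, and the starting point here, is an integral representation (from \cite{LZ22}, quoted as Proposition~\ref{foranlysis}) that expresses the full $n$-point function $R_N^{(n)}$ directly as a single integral over several \emph{matrix} variables: an $n\times n$ matrix $Y$, upper-triangular $n\times n$ matrices $T_1,\ldots,T_t$, and rectangular matrices $Q_0,Q_{t+1}$. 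This representation arises from a complete Schur decomposition and duality formulas for characteristic polynomials, not from any determinantal structure of the model, so the reduction ``it suffices to analyze $K_N$'' is not justified.

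The paper's proof is a Laplace analysis of this matrix integral. Two maximum lemmas (Lemmas~\ref{maximumY} and~\ref{maximum lemma}) locate the maximizer at $Y=\sqrt{t_0}\,U_1U_2$, $T_\alpha=\sqrt{\tau c_\alpha/(f_\alpha+t_0)}\,\mathbb{I}_n$, $Q_0=Q_{t+1}=0$; a concentration reduction (Proposition~\ref{RNn delta}) localizes to a $\delta$-neighborhood; then Taylor expansion of the exponent $f$ and the amplitude $g$, followed by explicit Gaussian and Gamma-type integrals over the fluctuation variables $\widetilde T_{{\rm d},\alpha},T_{{\rm u},\alpha},Q_0,Q_{t+1},\Gamma_1,G_1,\widetilde\Lambda$, reduces everything to a residual integral over a single unitary $W\in\mathcal{U}(n)$. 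It is only at this final step that a determinant appears, via the Harish-Chandra--Itzykson--Zuber formula \eqref{HCIZ}, which converts $\int_{\mathcal{U}(n)}\exp\{c\,\mathrm{Tr}(W^*\hat Z^*W\hat Z)\}\,{\rm d}W$ into $\det[e^{c\,\hat z_i\overline{\hat z_j}}]_{i,j=1}^n$. The Ginibre bulk kernel therefore emerges from HCIZ at the end of the computation, not from an a priori determinantal representation; your saddle-point picture over complex contour variables $u,v$ with a scalar phase $\Phi$ does not match the actual structure of the problem.
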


\begin{remark}
Note in the right hand side of \eqref{corre2edgecomplex noncritical}, the time parameter $\tau$ does not exist, while it appears in the rescaling 
coefficients \begin{small}
$ \Big(  
t_0P_1+\frac{|P_0|^2}{P_1}
 \Big)^{-\frac{1}{2}} $
\end{small}, which is indicated by \eqref{bulkparameter} and \eqref{parameter2}. By setting $n=1$ in \eqref{corre2edgecomplex noncritical}, we get the asymptotic formula for the mean level density
\begin{equation}\label{meanleveldensity}
\frac{1}{N}
R_N^{(1)}\big(
A_0;z_0,\cdots,z_0\big)=\frac{t_0P_1}{\pi}+\frac{|P_0|^2}{\pi P_1}
+O\big( N^{-\frac{1}{2}} \big),
\end{equation} 
this result coincides with \cite[Equation (1.12)]{BPD}, which describes the density of probability measure of the sum of the $\star$-free circular and normal operators, in the terminology of free probability theory.
\end{remark}

\begin{remark}
 The condition $ R_0\geq n $ comes from \cite[Proposition 1.3]{LZ22}, which provides an integral representation for the n-point correlations of GinUE$_{N}(A_0)$ in Definition \ref{GinU}, the appearance of this condition comes from the fact when proving \cite[Proposition 1.3]{LZ22}, we use induction on the matrix dimension $N$, and needs to reduce the n-point correlation function to a matrix integral consisting of expectation with respect to the same deformed Ginibre ensemble but with dimension $N-n$; This means the mean matrix $X_0={\rm diag}\left( A_0,0_{N-r=R_0} \right)$(cf. Definition \ref{GinU}) should have enough zeros to fit the truncation. The condition $a_{\alpha}\not=z_0$ for $\alpha=1,\cdots,t$ is essential, because by \eqref{A0 form}
we know the testing point $z_0$ is an eigenvalue of the mean matrix $X_0$,  if the condition holds, then the geometric multiplicity of $z_0$ is finite independent of $N$, otherwise it will tend to infinity as the dimension of the random matrix tends to infinity. Technically, the condition $a_{\alpha}\not=z_0$ for $\alpha=1,\cdots,t$ is necessary in the proof of Proposition \ref{RNn delta}, to be more precise, in deriving the step \eqref{error term2}.
\end{remark}

\begin{remark}
As having discussed with the authors of \cite{MO}, we must admit there is overlap between our main result Theorem \ref{2-complex-correlation} and \cite[Theorem 1.1]{MO}. While we believe this paper still has its own value, as the methodologies between our paper and \cite{MO} are different. In the proof of \cite[Theorem 1.1]{MO}, the authors apply the partial Schur decomposition to express the n-point correlation function as  a multiple integral which involves matrix and spherical integrals and an expectation with respect to the product of characteristic polynomials, see \cite[Equation (3.19)]{MO}; Although our idea in this paper is similar, we apply a complete(rather than partial) Schur decomposition to express the n-point correlation function as  a multiple integral which involves matrix integrals and an expectation with respect to the product of characteristic polynomials, this has been done in \cite[Equation (1.9)]{LZ22}(which includes me as one of the authors), we claim that \cite[Equation (3.19)]{MO} and \cite[Equation (1.9)]{LZ22} are different, also \cite[Equation (1.9)]{LZ22} is an earlier result. To proceed the analysis, the authors of \cite{MO} need to apply inequalities of concentration of spherical integral, see \cite[Lemma 6.2\& 6.3]{MO}. In contrast, we are devoted to the Laplace method for matrix integrals, see Proposition \ref{foranlysis}, using of two maximum lemmas in Section \ref{Sectmax} gives us the concentration reduction Proposition \ref{RNn delta}, which restricts the matrix variables in a small region around the maximum point. Then we can do Taylor expansions of functions of the matrix variables. In conclusion, {\bf proofs between our paper and \cite[Theorem 1.1]{MO} are established independently, and are different in many aspects}. Our methods may be more elementary, which could be treated as a classical example of the Laplace method for matrix integrals. The methodology in the proof of Theorem \ref{2-complex-correlation} still applies when considering the edge statistics, see \cite{LZ22,LZ23,LZ24}.
\end{remark}

\section{Integral representation of correlation functions}

\subsection{Notation} \label{sectnotation}
  Denote  the conjugate, transpose,  and  conjugate transpose of a complex matrix $A$ by  $\overline{A}$,  $A^t$ and $A^*$,  respectively.   Besides, the following symbols may be used in this and subsequent sections.  
Denote the tensor product of an $m\times n$  matrix  $A=[a_{i,j}]$  and  a $p\times q$ matrix  $B$ as a block matrix 
\begin{small}
$$
A \otimes B=\left[\begin{smallmatrix}
a_{11}B   &\cdots  &  a_{1n} B   \\ \vdots  & \ddots &   \vdots   \\
a_{m1}B   &\cdots  &  a_{mn} B   
\end{smallmatrix}\right],$$
\end{small}
and denote the Hilbert-Schmidt norm of a complex matrix $M$ as $\|M\|:=\sqrt{{\rm{Tr}}(MM^*)}$.     $M=O(A_N)$ for some matrix sequence $A_N$  means that each  element  has the same order as that of $ A_N$.

\subsection{Matrix variables} \label{sectMatrixvariables}
To prove Theorem \ref{2-complex-correlation}, we need a matrix integral representation of correlation functions, see \eqref{foranlysisequ}. To this end, recalling \eqref{A0 form}, we need to introduce several matrix variables $Y$, $Q_0$, $Q_{t+1}$ and upper triangular $T_{\alpha}$ with non-negative diagonal elements for $\alpha=1,\cdots,t$, which are of sizes $n\times n$, $n\times r_0$, $n\times r_{t+1}$ and $n\times n$ respectively. Introduce scaled spectral points which play the role as parameter in correlation function
\begin{small}
\begin{equation}\label{zi}
Z=z_0 I_{n}+N^{-\frac{1}{2}}\hat{Z}, \quad Z=\mathrm{diag}(z_1, \ldots,z_n), \  \hat{Z}=\mathrm{diag}(\hat{z}_1, \ldots,\hat{z}_n).
\end{equation} 
\end{small}
Denote the following block matrices: for $\alpha=1,\cdots,t$, set
\begin{small}
\begin{equation}\label{A alpha}
A_{\alpha}=
\begin{bmatrix}
\sqrt{\gamma_N}(Z-a_{\alpha}\mathbb{I}_n)  &
-Y^* \\
Y & \sqrt{\gamma_N}
\big(Z^*-\overline{a}_{\alpha}\mathbb{I}_n
\big)
\end{bmatrix},
\end{equation}
\end{small}
\begin{small}
\begin{equation}\label{L1hat}
\widehat{L}_1={\rm diag}\big(
A_1\otimes \mathbb{I}_n,\cdots,
A_t\otimes \mathbb{I}_n,
\widehat{B}_{t+1}
\big),
\end{equation}
\end{small}
\begin{small}
\begin{equation}\label{B0hat}
\widehat{B}_{t+1}=
\begin{bmatrix}
\sqrt{\gamma_N}
\big(
Z\otimes \mathbb{I}_{\widetilde{r}_{t+1}}
-\mathbb{I}_n\otimes \widetilde{A}_{t+1}
\big)  &
-Y^*\otimes \mathbb{I}_{\widetilde{r}_{t+1}} \\
 Y\otimes \mathbb{I}_{\widetilde{r}_{t+1}} &  \sqrt{\gamma_N}
\big(
Z^*\otimes \mathbb{I}_{\widetilde{r}_{t+1}}
-\mathbb{I}_n\otimes \widetilde{A}_{t+1}^*
\big)
\end{bmatrix},
\end{equation}
\end{small}
\begin{small}
\begin{equation}\label{L0hat}
\widehat{L}_2=
\left[
\begin{smallmatrix}
\left[
\left[\begin{smallmatrix}
a_{\alpha}\mathbb{I}_n & \\ & \overline{a}_{\alpha}\mathbb{I}_n
\end{smallmatrix}\right]
\otimes
(T_{\alpha}^*T_{\beta}) \right]_{\alpha,\beta=1}^t
&
\left[
\left[\begin{smallmatrix}
a_{\alpha}\mathbb{I}_n & \\ & \overline{a}_{\alpha}\mathbb{I}_n
\end{smallmatrix}\right]
\otimes
(T_{\alpha}^*\widetilde{Q}_{t+1}) \right]_{\alpha=1}^t
   \\
\left[
\begin{smallmatrix}
\mathbb{I}_n \otimes 
(\widetilde{A}_{t+1}\widetilde{Q}_{t+1}^*
T_{\beta})  &  \\
 &  
\mathbb{I}_n  \otimes 
 (\widetilde{A}_{t+1}^*\widetilde{Q}_{t+1}^*
T_{\beta})
\end{smallmatrix}\right]_{\beta=1}^t
&
\left[\begin{smallmatrix}
\mathbb{I}_n  \otimes 
(\widetilde{A}_{t+1}\widetilde{Q}_{t+1}^*\widetilde{Q}_{t+1})
 &  \\
& 
\mathbb{I}_n  \otimes 
(\widetilde{A}_{t+1}^*\widetilde{Q}_{t+1}^*\widetilde{Q}_{t+1})
\end{smallmatrix}\right]
\end{smallmatrix}\right],
\end{equation}
\end{small}
where
\begin{equation}\label{Q convenience} 
\widetilde{Q}_{t+1}=\left[ Q_0,Q_{t+1}
\right],\quad
\widetilde{A}_{t+1}={\rm diag}\left( z_0\mathbb{I}_{r_0},A_{t+1}
\right),\quad
\widetilde{r}_{t+1}=r_0+r_{t+1},
\end{equation}
whenever $r_0>0$.


 On the other hand, rewrite $T_{\alpha}$ as  sum of  a diagonal matrix $\sqrt{T_{{\rm d},\alpha}}$ and a strictly upper triangular  matrix ${T_{{\rm u},\alpha}}$
\begin{small}
\begin{equation}\label{Talpha 2}
T_{\alpha}=\sqrt{T_{{\rm d},\alpha}}+T_{{\rm u},\alpha}:=\begin{bmatrix}
\sqrt{t_{1,1}^{(\alpha)}} & \cdots & t_{i,j}^{(\alpha)} \\
 & \ddots & \vdots \\
 & & \sqrt{t_{n,n}^{(\alpha)}}
\end{bmatrix},
\end{equation} 
\end{small}
where
\begin{small}
\begin{equation*}
T_{{\rm d},\alpha}={\rm diag}
\big(
t_{1,1}^{(\alpha)},\cdots,t_{n,n}^{(\alpha)}
\big).
\end{equation*}
\end{small}

\subsection{Sketch of the proof for Theorem \ref{2-complex-correlation}} \label{sectsketch}
Combining integral representations for the n-point correlations given in our previous paper \cite[Proposition 1.3]{LZ22} and duality formulas for auto-correlation functions of characteristic polynomials in \cite{Gr, LZ22}, we can write the n-point correlation function $R_N^{(n)}(A_0;z_1,\cdots,z_n)$ in an appropriate form which is ready for asymptotic analysis by the Laplace method argument. This has already been done in \cite[Proposition 2.2]{LZ23}: 
\begin{proposition}\label{foranlysis} 
Let $R_0\geq n$, then
\begin{small}
\begin{equation}\label{foranlysisequ} 
\begin{aligned}
R_N^{(n)}(A_0;z_1,\cdots,z_n)=
\widetilde{C}_{N,n}
\int \cdots \int g(T,Y,Q)\exp\{ Nf(T,Y,Q) \}
{\rm d}Y
{\rm d}Q_0{\rm d}Q_{t+1} \prod_{\alpha=1}^t
{\rm d}T_{\alpha},
\end{aligned}
\end{equation}
\end{small}
where 
\begin{small}
\begin{equation}\label{fTY}
\begin{aligned}
&f(T,Y,Q)=\sum_{\alpha=1}^t
c_{\alpha}\log\det(T_{\alpha}T_{\alpha}^*)
+\frac{1}{\tau}h(T,Q)
-
\frac{N-n}{\tau N}
{\rm Tr}\big(
YY^*
\big)+\sum_{\alpha=1}^tc_{\alpha}\log\det(A_{\alpha}),
\end{aligned}
\end{equation}
\end{small}
with
\begin{small}
\begin{equation}\label{hQrewrite}
\begin{aligned}
&h(T,Q)=\sum_{\alpha=1}^t\left(-|z_0-a_{\alpha}|^2{\rm Tr}\big(
T_{\alpha}T_{\alpha}^*
\big)
+N^{-\frac{1}{2}}
\left(
\overline{a}_{\alpha}{\rm Tr}\big(
\hat{Z} T_{\alpha}T_{\alpha}^*
\big)+a_{\alpha}{\rm Tr}\big(
\hat{Z}^* T_{\alpha}T_{\alpha}^*
\big)
\right) 
\right)        \\
&+
|z_0|^2\Big(
\sum_{\alpha=1}^t
{\rm Tr}\big(
T_{\alpha}T_{\alpha}^*
\big)+{\rm Tr}\big(
Q_0Q_0^*
\big)+{\rm Tr}\big(
Q_{t+1}Q_{t+1}^*
\big)
\Big)         \\       
&+N^{-\frac{1}{2}}
\Big(
\overline{z}_0{\rm Tr}\big(
\hat{Z} Q_0Q_0^*
\big)+z_0{\rm Tr}\big(
\hat{Z}^* Q_0Q_0^*
\big)
+
{\rm Tr}\big(
\hat{Z} Q_{t+1}A_{t+1}^*Q_{t+1}^*
\big)+{\rm Tr}\big(
\hat{Z}^* Q_{t+1}A_{t+1}Q_{t+1}^*
\big)
\Big)  \\
&+\sum_{1\leq i < j\leq n}
\Big |
\Big(
\sum_{\alpha=1}^t a_{\alpha}T_{\alpha}T_{\alpha}^*
+z_0Q_0Q_0^*+Q_{t+1}A_{t+1}Q_{t+1}^*
\Big)_{i,j}
\Big|^2          \\
&-{\rm Tr}Q_{t+1}
\big(
z_0\mathbb{I}_{r_{t+1}}-A_{t+1}
\big)^*
\big(
z_0\mathbb{I}_{r_{t+1}}-A_{t+1}
\big)
Q_{t+1}^*,
\end{aligned}
\end{equation}
\end{small}
and
\begin{small}
\begin{equation}\label{gYUT}
\begin{aligned}
g(T,Y,Q)=& \Big(\det\big(
\mathbb{I}_n-\sum_{\alpha=1}^t T_{\alpha}T_{\alpha}^*
-Q_0Q_0^*-Q_{t+1}Q_{t+1}^*
\big)\Big)^{R_0-n}  \Bigg(
\det\!\begin{bmatrix}
\sqrt{\gamma_N}Z & -Y^*   \\
Y  &   \sqrt{\gamma_N} Z^*
\end{bmatrix}
\Bigg)^{R_0-n}
\\
&\times
\Big(\prod_{\alpha=1}^t\prod_{j=1}^n
\big(
t_{j,j}^{(\alpha)}
\big)^{R_{\alpha,N}-j}\Big)
\Big(\prod_{\alpha=1}^t
\big(
\det(A_{\alpha})
\big)^{R_{\alpha,N}-n} \Big) \det\!\bigg(
\widehat{L}_1+\sqrt{\gamma_N}\widehat{L}_2
\bigg),
\end{aligned}
\end{equation}
\end{small}
Here $R_{\alpha,N}$ is defined in \eqref{ralpha N} and
\begin{equation}\label{gammaN}
\gamma_N=\frac{N}{N-n},
\end{equation}
\begin{equation} \label{norm-1}
\widetilde{C}_{N,n,\tau}=
\Big( \frac{N-n}{\pi\tau} \Big)^{n^2}
\frac{1}{C_{N,\tau}}e^{-\frac{N}{\tau}\sum_{k=1}^n|z_k|^2}
\prod_{1\leq i <j \leq n}|z_i-z_j|^2
\prod_{\alpha=1}^t
\frac{\pi^{nr_{\alpha}-\frac{n(n-1)}{2}}}
{\prod_{j=1}^n(r_{\alpha}-j)!}
\end{equation}
and
\begin{equation*}
C_{N,\tau}= 
\tau^{nN-\frac{n(n-1)}{2}}
\pi^{n(r+1)} N^{-\frac{1}{2}n(n+1)} (N-n)^{-n(N-n)}     \prod_{k=N-n-r}^{N-r-1}  k !,
\end{equation*}
the matrix  variables have the restrictions
\begin{equation}\label{integrationregionprop}
\sum_{\alpha=1}^t T_{\alpha}T_{\alpha}^*+Q_0Q_0^*+Q_{t+1}Q_{t+1}^*\leq \mathbb{I}_n.
\end{equation}
\end{proposition}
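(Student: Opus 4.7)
Since this identity is already established as \cite[Proposition 2.2]{LZ23}, the strategy is the one outlined in the text just before the statement: compose the integral representation of $R_N^{(n)}$ from \cite[Proposition 1.3]{LZ22}, obtained via a complete Schur decomposition, with the matrix-integral duality for products of characteristic polynomials from \cite{Gr,LZ22}, and then reorganize the resulting object by separating its bulk exponential piece (which will be $Nf$) from the polynomial prefactor (which will be $g$).

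\textbf{First step.} The cited proposition from \cite{LZ22} yields, for $R_0\geq n$, an identity of the form
\begin{equation*}
R_N^{(n)}(A_0;z_1,\dots,z_n)=\frac{\prod_{i<j}|z_i-z_j|^2\,e^{-\tfrac{N}{\tau}\sum_k|z_k|^2}}{C_{N,\tau}}\,\mathbb{E}\Big[\prod_{k=1}^n\det(z_kI-X')\det(\bar z_kI-X'^{*})\Big],
\end{equation*}
where $X'$ is a deformed GinUE of dimension $N-n$ with mean $\mathrm{diag}(A_0,0_{R_0-n})$; the hypothesis $R_0\geq n$ is precisely what makes this truncation legal. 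Applying the duality identity from \cite{Gr,LZ22} then rewrites this expectation as a matrix integral. Because $A_0$ has the block structure \eqref{A0 form}, each block contributes an independent dual variable: the scalar blocks $a_\alpha\mathbb{I}_{r_\alpha}$ produce the upper-triangular $T_\alpha$, the block $z_0\mathbb{I}_{r_0}$ produces $Q_0$, the block $A_{t+1}$ produces $Q_{t+1}$, and the $R_0-n$ trailing zeros couple $\sqrt{\gamma_N}Z$ with $Y$, generating both the $2n\times 2n$ determinant in \eqref{gYUT} and the positivity constraint \eqref{integrationregionprop}. The coupling between $\hat Z$ and the mean-matrix blocks condenses into the single determinant $\det(\widehat L_1+\sqrt{\gamma_N}\widehat L_2)$.

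\textbf{Identification of $f$ and $g$.} After substituting $Z=z_0 I_n+N^{-1/2}\hat Z$ and $r_\alpha=c_\alpha N+R_{\alpha,N}$ everywhere, each $\det(T_\alpha T_\alpha^*)^{r_\alpha}$ arising from the duality splits as $e^{Nc_\alpha\log\det(T_\alpha T_\alpha^*)}$ (a term of $f$) times $\prod_j(t_{jj}^{(\alpha)})^{R_{\alpha,N}-j}$ (a factor of $g$), the shift $-j$ being the Jacobian of the QR parametrization \eqref{Talpha 2}; analogously for $\det(A_\alpha)^{r_\alpha}$. Expanding $\mathrm{Tr}(X-X_0)(X-X_0)^*$ in the Gaussian weight around $z_0$ and separating the $O(1)$ terms from the $O(N^{-1/2})$ corrections reproduces exactly the blocks of \eqref{hQrewrite}, including the off-diagonal $|(\cdot)_{i,j}|^2$ terms coming from the strictly upper-triangular parts of the $T_\alpha$; the coefficient $-\tfrac{N-n}{\tau N}$ on $\mathrm{Tr}(YY^*)$ arises from the Gaussian in $Y$ after $\gamma_N=N/(N-n)$ has been pulled out.

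\textbf{Main obstacle.} The delicate point is the bookkeeping of $\widetilde C_{N,n,\tau}$ in \eqref{norm-1}, whose factors come from four independent sources: $C_{N,\tau}$ carries the Gaussian normalization $\tau^{nN-n(n-1)/2}$, the Schur Vandermonde product $\prod_{k=N-n-r}^{N-r-1}k!$ and the unitary-volume $\pi^{n(r+1)}$; $(N-n)^{-n(N-n)}$ is the Jacobian of the reduction to dimension $N-n$; the factors $\pi^{nr_\alpha-n(n-1)/2}/\prod_j(r_\alpha-j)!$ normalize the dual Gaussian measures on each $T_\alpha$ block; and $(\tfrac{N-n}{\pi\tau})^{n^2}$ is produced by the rescaled $Y$-integration. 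These must combine cleanly so that the prefactor is stable for the subsequent Laplace analysis, but no new ideas beyond \cite{LZ22,LZ23} are required.
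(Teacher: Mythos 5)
Your proposal correctly identifies the strategy the paper indicates: reduce $R_N^{(n)}$ via \cite[Proposition~1.3]{LZ22} to an expectation of products of characteristic polynomials over an $(N-n)$-dimensional deformed GinUE (this is where $R_0 \ge n$ enters), apply the duality formula from \cite{Gr,LZ22} to convert it into the matrix integral over $T_\alpha,Q_0,Q_{t+1},Y$, and then split the integrand into the exponential $e^{Nf}$ and the polynomial prefactor $g$ according to the scaling $r_\alpha = c_\alpha N + R_{\alpha,N}$. The paper itself gives no independent proof and simply refers to \cite[Section~2.2]{LZ23}, which is exactly the path you sketched, so your outline matches the paper's approach.
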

For proof, we refer the reader to read Section 2.2 in \cite{LZ23} directly, which is our first paper in this series, and has the same setting with the present paper.

Now we need to give a concentration reduction by restricting integration region to a small domain. For this, we need to specify the scale of $N$ in each quantity, from  \eqref{A alpha} we have  
\begin{equation*}
A_{\alpha}=
\begin{bmatrix}
\sqrt{ \gamma_N} (z_0-a_{\alpha}) \mathbb{I}_n &
-Y^* \\
Y &
\sqrt{ \gamma_N} \overline{z_0-a_{\alpha}}\mathbb{I}_n
\end{bmatrix}+
\sqrt{ \gamma_N}N^{-\frac{1}{2}}
\begin{bmatrix}
\hat{Z}  &   \\
&   \hat{Z}^*
\end{bmatrix},
\end{equation*}
and 
\begin{small}
\begin{equation}\label{logAalpha decompose}
\log\det(A_{\alpha})=\log\det\left(
\gamma_N f_{\alpha}\mathbb{I}_n+YY^*
\right)+\log\det\left(
\mathbb{I}_{2n}+\sqrt{ \gamma_N}N^{-\frac{1}{2}}
\widehat{A}_{\alpha}
\right),
\end{equation}
\end{small}
where,   for $ \alpha=1,\cdots,t$, 
\begin{equation}\label{falpha}f_{\alpha}:=|z_0-a_{\alpha}|^2,\end{equation}
and
\begin{equation}\label{HAalpha}
\begin{aligned}
&\widehat{A}_{\alpha}=
\left[\begin{smallmatrix}
\sqrt{\gamma_N}\overline{z_0-a_{\alpha}}\left(
 \gamma_N f_{\alpha}\mathbb{I}_n+Y^*Y
\right)^{-1}\hat{Z}
& Y^*\left(
\gamma_N f_{\alpha}\mathbb{I}_n+YY^*
\right)^{-1}\hat{Z}^*       \\
-Y\left(
\gamma_N f_{\alpha}\mathbb{I}_n+Y^*Y
\right)^{-1}\hat{Z}
& \sqrt{\gamma_N}(z_0-a_{\alpha})\left(
\gamma_Nf_{\alpha}\mathbb{I}_n+YY^*
\right)^{-1}\hat{Z}^*
\end{smallmatrix}\right].
\end{aligned}
\end{equation}

Take all the  leading terms of $f(T,Y,Q)$ in  \eqref{fTY} we have
\begin{small}
\begin{equation}\label{f0TY}
\begin{aligned}
&f_0(\tau;T,Y,Q)=\sum_{\alpha=1}^t\left(
c_{\alpha}\log\det(T_{\alpha}T_{\alpha}^*) 
+c_{\alpha}
\log\det\big(
f_{\alpha}\mathbb{I}_n+YY^*
\big)\right)       
 +
\frac{1}{\tau}\Big(h_{0}(T,Q)- {\rm Tr}\big(YY^*)\Big)
\end{aligned}
\end{equation}
\end{small}
with \begin{small}
\begin{equation*}
\begin{aligned}
&h_{0}(T,Q)= 
-\sum_{\alpha=1}^t f_{\alpha}{\rm Tr}\big(
T_{\alpha}T_{\alpha}^*
\big)
-{\rm Tr}Q_{t+1}
\big(
z_0\mathbb{I}_{r_{t+1}}-A_{t+1}
\big)^*
\big(
z_0\mathbb{I}_{r_{t+1}}-A_{t+1}
\big)
Q_{t+1}^*               \\
&+
|z_0|^2\bigg(
\sum_{\alpha=1}^t
{\rm Tr}\big(
T_{\alpha}T_{\alpha}^*
\big)+{\rm Tr}\big(
Q_0Q_0^*
\big)+{\rm Tr}\big(
Q_{t+1}Q_{t+1}^*
\big)
\bigg)         \\ 
&+\sum_{1\leq i < j\leq n}
\bigg|
\bigg(
\sum_{\alpha=1}^t a_{\alpha}T_{\alpha}T_{\alpha}^*
+z_0Q_0Q_0^*+Q_{t+1}A_{t+1}Q_{t+1}^*
\bigg)_{i,j}
\bigg|^2.         
\end{aligned}
\end{equation*}
\end{small}
Note that  the two sets of matrix variables $\{Y\}$ and  $ \{Q_0, Q_{t+1}, T_{1}, \ldots, T_t\}$ in $f_0(\tau;T,Y)$ are separate 
in  $f_0(\tau;T,Y)$,  we can do Taylor expansion near  the maximum points respectively, according to  Lemma \ref{maximumY}  and Lemma  \ref{maximum lemma}  in the subsequent Section \ref{Sectmax} below. 

Take the singular value decomposition
\begin{small}
\begin{equation}\label{singularcorre}
Y=U_1\sqrt{\Lambda}U_2,\quad
\Lambda={\rm diag}\left( \lambda_1,\cdots,\lambda_n \right),\quad
\lambda_1\geq \cdots \geq \lambda_n \geq 0,
\end{equation}
\end{small}
 the Jacobian reads 
\begin{equation}\label{singularjacobian}
{\rm d}Y=\pi^{n^2}\Big( \prod_{i=1}^{n-1}i! \Big)^{-2}
\prod_{1\leq i<j\leq n}(\lambda_{j}-\lambda_{i})^2 
{\rm d}\Lambda{\rm d}U_1{\rm d}U_2,
\end{equation}
where 
$U_1$ and $U_2$ are chosen from the unitary group  $\mathcal{U}(n)$ with the Haar measure.

By restricting the  integration region to 
\begin{equation}\label{regiondelta}
\Omega_{N,\delta}=\Omega
\cap
A_{N,\delta},
\end{equation}
where
\begin{small}
\begin{equation}\label{Omega}
\Omega=\bigg\{ \big(\{T_\alpha\}, U_1,\Lambda,U_2, Q_0,Q_{t+1}\big)\Big|\sum_{\alpha=1}^t
T_{\alpha}T_{\alpha}^*+Q_0Q_0^*+Q_{t+1}Q_{t+1}^*\leq \mathbb{I}_n
,\lambda_1\geq \cdots \geq \lambda_n \geq 0 \bigg\}
\end{equation}
\end{small}
and
\begin{equation}\label{ANdelta}
A_{N,\delta}=\left\{ \big(\{T_\alpha\},U_1,\Lambda,U_2, Q_0,Q_{t+1}\big)\Big|
S(T,Y,Q)
\leq \delta \right\},
\end{equation}
with positive constant $\delta$ and
\begin{small}
\begin{equation}\label{SYUT}
\begin{aligned}
S(T,Y,Q)&=\sum_{\alpha=1}^t
\Big(
{\rm Tr}\big(
T_{{\rm d},\alpha}-\tau c_{\alpha} (|z_0-a_{\alpha}|^{2}+t_0)^{-1}
\mathbb{I}_n
\big)
\big(
T_{{\rm d},\alpha}-\tau c_{\alpha} (|z_0-a_{\alpha}|^{2}+t_0)^{-1}\mathbb{I}_n
\big)^*
\\&+{\rm Tr}(T_{{\rm u},\alpha}T_{{\rm u},\alpha}^*)
\Big)+
{\rm Tr}\Big(
Q_0Q_0^*+Q_{t+1}Q_{t+1}^*+
\big(
\Lambda-t_0\mathbb{I}_n
\big)^2\Big),
\end{aligned}
\end{equation}
\end{small}
we have  a concentration reduction, the proof is given in Section \ref{propfo}.    
\begin{proposition}\label{RNn delta}
 With the same notations and conditions as in Theorem \ref{2-complex-correlation} and Proposition \ref{foranlysis}, then for any positive constant $\delta$ there exists $\Delta>0$ such that
\begin{equation}\label{RNndelta}
R_N^{(n)}(A_0;z_1,\cdots,z_n)=
D_{N,n}\,\Big(
I_{N,\delta}+O\big(
e^{-\frac{1}{2}N\Delta}
\big)
\Big),
\end{equation}
where
\begin{small}
\begin{equation}\label{INdelta}
\begin{aligned}
&I_{N,\delta}=
\pi^{n^2}\Big( \prod_{i=1}^{n-1}i! \Big)^{-2}  \int_{\Omega_{N,\delta}} 
  g(T,U_1\sqrt{\Lambda}U_2,Q)\prod_{1\leq i<j\leq n}(\lambda_{j}-\lambda_{i})^2 
\\&\times  \exp\Big\{ 
  N\big(f(T,U_1\sqrt{\Lambda}U_2,Q)
  -f_0\big(\tau;\big\{\frac{\tau c_{\alpha}}{f_{\alpha}+t_0}\mathbb{I}_n\big\},\sqrt{t_0}U_1U_2,0\big)\big)
 \Big\} {\rm d}V,
 \end{aligned}
\end{equation}
\end{small}
  and 
\begin{equation}\label{DNn}
{\rm d}V= {\rm d}\Lambda{\rm d}U_1{\rm d}U_2
{\rm d}Q_0{\rm d}Q_{t+1} \prod_{\alpha=1}^t
{\rm d}T_{\alpha},\quad D_{N,n}=\widetilde{C}_{N,n}
e^{nN(|z_0|^2/\tau-1)}\prod_{\alpha=1}^t
(\tau c_{\alpha})^{nNc_{\alpha}}.
\end{equation}

\end{proposition}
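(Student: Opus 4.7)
The plan is to apply a Laplace-type concentration argument to the integral representation of Proposition~\ref{foranlysis}. I would proceed in four stages: (i) change variables in $Y$ via the singular-value decomposition \eqref{singularcorre}--\eqref{singularjacobian}; (ii) locate the unique maximizer of the leading exponent $f_0$ on $\Omega$; (iii) factor out $\exp\{Nf_0(\mathrm{max})\}$ to extract the prefactor $D_{N,n}$; and (iv) show that the integrand decays exponentially off the $\delta$-neighborhood $\Omega_{N,\delta}$.

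For step~(ii), the variables $\{Y\}$ and $\{T_\alpha,Q_0,Q_{t+1}\}$ decouple in $f_0$, so Lemma~\ref{maximumY} applies to the $Y$-block and pins its maximum at $YY^* = t_0\mathbb{I}_n$, while Lemma~\ref{maximum lemma} applies to the remaining block and fixes $T_\alpha T_\alpha^* = \tfrac{\tau c_\alpha}{f_\alpha+t_0}\mathbb{I}_n$ together with $Q_0 = Q_{t+1} = 0$. A direct evaluation at this critical point, invoking \eqref{bulkparameter} and \eqref{calpha sum}, gives
\begin{small}
\[
Nf_0(\mathrm{max}) \;=\; nN\!\left(\tfrac{|z_0|^2}{\tau}-1\right) + nN\sum_{\alpha=1}^t c_\alpha\log(\tau c_\alpha),
\]
\end{small}
so that $\widetilde{C}_{N,n}\exp\{Nf_0(\mathrm{max})\}$ reproduces exactly $D_{N,n}$ in \eqref{DNn}; subtracting this constant from the exponent recasts the integrand as the one that appears in $I_{N,\delta}$.

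The main technical work is step~(iv). I would split $\Omega$ into a compact piece $\{\|Y\|\leq M\}$ and its complement, handling each separately. On the compact piece, write $f = f_0 + (f-f_0)$; inspection of \eqref{hQrewrite} and \eqref{logAalpha decompose} shows that $f - f_0 = O(N^{-1/2})$ uniformly, each correction either carrying an explicit $N^{-1/2}$ factor from $\hat{Z}$-contractions or arising from $\log\det(\mathbb{I}_{2n}+\sqrt{\gamma_N}N^{-1/2}\widehat{A}_\alpha)$. The expansion of the latter is controlled only when the resolvent $(\gamma_N f_\alpha\mathbb{I}_n + YY^*)^{-1}$ has bounded operator norm, which is exactly the role of the hypothesis $a_\alpha \neq z_0$ (the step \eqref{error term2} referenced in the remark): it forces $f_\alpha = |z_0-a_\alpha|^2 > 0$, keeping $\widehat{A}_\alpha$ uniformly bounded. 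A quantitative strict-concavity bound at the maximum, obtained from the Hessian information supplied by the two maximum lemmas, yields $f_0(\mathrm{max}) - f_0(T,Y,Q) \geq c\, S(T,Y,Q)$ in a neighbourhood, and a compactness-plus-continuity argument extends this (with a possibly smaller constant) to the whole compact piece. Hence on $\{S>\delta\}\cap\{\|Y\|\leq M\}$ one obtains $N(f - f_0(\mathrm{max})) \leq -cN\delta + O(N^{1/2}) \leq -\tfrac{1}{2}N\Delta$ for all large $N$. The tail $\|Y\| > M$ is dispatched by the Gaussian term $-\tfrac{N-n}{\tau N}\operatorname{Tr}(YY^*)$ in $f$, whose decay outstrips the polynomial factors in $g$ (including the Vandermonde $\prod_{i<j}(\lambda_j-\lambda_i)^2$ and $(\det A_\alpha)^{R_{\alpha,N}-n}$), producing an $O(e^{-cN})$ contribution. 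The principal obstacle is the quantitative lower bound tying $S$ to $f_0(\mathrm{max})-f_0$ uniformly on the compact piece; once this is in hand, combining the three regimes delivers the claimed $O(e^{-N\Delta/2})$ error.
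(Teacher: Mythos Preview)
Your strategy matches the paper's: locate the unique maximizer of $f_0$ via Lemmas~\ref{maximumY} and~\ref{maximum lemma}, factor out $D_{N,n}$, and show exponential suppression on $\Omega\setminus\Omega_{N,\delta}$. The identification of $D_{N,n}$ in step~(iii) is correct.

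There is, however, a gap in step~(iv). You bound $N(f-f_0(\mathrm{max}))\le -cN\delta+O(N^{1/2})$ pointwise on $\{S>\delta\}\cap\{\|Y\|\le M\}$, but to turn this into a bound on the integral you implicitly need $\int |g|\cdot\prod_{i<j}(\lambda_j-\lambda_i)^2\,\mathrm{d}V<\infty$ over that region. This can fail: $g$ carries the factor $\prod_{\alpha,j}(t_{j,j}^{(\alpha)})^{R_{\alpha,N}-j}$, and \eqref{ralpha N} permits $R_{\alpha,N}<n$, producing a non-integrable singularity at $t_{j,j}^{(\alpha)}=0$ (a point which lies in $\{S>\delta\}$). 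The paper handles this---and simultaneously avoids your compact/noncompact split in $Y$---by retaining a fixed portion of the exponent inside the integral: write $e^{N(f_0-f_0(\mathrm{max}))}=e^{(N-N_0)(f_0-f_0(\mathrm{max}))}\,e^{N_0(f_0-f_0(\mathrm{max}))}$, bound the first factor by $e^{-(N-N_0)\Delta}$ on $A_{N,\delta}^c$, and choose $N_0>\sum_\alpha(n-R_{\alpha,N})/c_\alpha$ so that the term $N_0 c_\alpha\log\det(T_\alpha T_\alpha^*)$ in the retained exponent restores integrability near $T_\alpha=0$ while the Gaussian $-\tfrac{N_0}{\tau}\mathrm{Tr}(YY^*)$ controls the unbounded $\Lambda$-direction. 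This device also absorbs the stray $\tfrac{n}{\tau}\mathrm{Tr}(\Lambda)$ term appearing in \eqref{fTY minus f00}, which is \emph{not} $O(N^{-1/2})$ uniformly once $\Lambda$ is unbounded---so your claim that $f-f_0=O(N^{-1/2})$ uniformly is only valid on your compact piece, not globally as the paper needs it. A minor additional point: the gap $\Delta$ on $\{S>\delta\}$ follows directly from uniqueness of the maximum, continuity, and the coercivity $f_0\to-\infty$ as $\|Y\|\to\infty$; your stronger quadratic inequality $f_0(\mathrm{max})-f_0\ge cS$ is not needed, and the maximum lemmas do not supply the Hessian information you invoke for it.
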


The remaining task is to analyze the matrix integral $I_{N,\delta}$ with sufficiently small $\delta>0$, which can be followed by a quite standard procedure of Laplace method. To be precise, the proof will be divided into four steps: {\bf Step 1} deals with Taylor expansion of $f(T,Y,Q)$ and the integration region(cf \eqref{integrationregionprop}), The most relevant variables are triangular matrices $\big\{ T_{\alpha} \big\}$, equivalently, diagonal matrices $\big\{ T_{{\rm d},\alpha} \big\}$ and strictly upper triangular matrices $\big\{ T_{{\rm u},\alpha} \big\}$ as in \eqref{Talpha 2}. With the coming of the change of variables
\begin{equation*}
T_{{\rm d},1},T_{{\rm d},2},\cdots,T_{{\rm d},t}\longmapsto
\widetilde{T}_{{\rm d},1},\widetilde{T}_{{\rm d},2},\cdots,\widetilde{T}_{{\rm d},t}\longmapsto \Gamma_1,\widetilde{T}_{{\rm d},2},\cdots,\widetilde{T}_{{\rm d},t}
\end{equation*}
and 
\begin{equation*}
T_{{\rm u},1},T_{{\rm u},2},\cdots,T_{{\rm u},t}\longmapsto
G_1,T_{{\rm u},2},\cdots,T_{{\rm u},t};
\end{equation*}
{\bf Step 2} deals with Taylor expansion of $g(T,Y,Q)$, which is totally based on elementary calculations; {\bf Step 3} is a standard procedure in Laplace method to remove the error terms resulted from the expansions in first two steps; In {\bf Step 4}, after dealing with several matrix integrals, we can complete the proof.

\subsection{Maximum  Lemmas} \label{Sectmax}
Given   complex numbers $z_0$,       $a_{\alpha}$  and   
$c_{\alpha}>0 $    ($\alpha=1,\cdots,t$),     assume that 
\begin{small}
\begin{equation*} 
\sum_{\alpha=1}^t c_{\alpha}=1, \quad  \sum_{\alpha=1}^t \frac{c_{\alpha}}{|z_0-a_{\alpha}|^2}> \frac{1}{\tau}.
 \end{equation*}
\end{small}
Let $t_0$ be  the unique  positive  solution of the equation   
\begin{small}
\begin{equation*}
\sum_{\alpha=1}^t \frac{c_{\alpha}}{|z_0-a_{\alpha}|^2+t_0}=\frac{1}{\tau}.
\end{equation*}
\end{small}

The following two lemmas hold and play a  significant  role in investigating local eigenvalue statistics of the deformed GinUEs. See also \cite[Lemma 2.4 and Lemma 2.5]{LZ23}. 
\begin{lemma}\label{maximumY}
As a function of a non-negative definite matrix $H$,
\begin{equation*}
 \sum_{\alpha=1}^t c_{\alpha}
\log\det\big(
|z_0-a_{\alpha}|^2 \mathbb{I}_n+H
\big)   -
\frac{1}{\tau}  {\rm Tr}(H)
\end{equation*}
attains it maximum only at  $H=t_0 I_n$. \end{lemma}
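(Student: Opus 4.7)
The plan is to exploit the unitary invariance of the functional together with strict concavity to reduce the problem to a one-variable calculus exercise.

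First I would note that the map
\begin{equation*}
F(H) := \sum_{\alpha=1}^t c_{\alpha}\log\det\bigl(f_{\alpha}\mathbb{I}_n+H\bigr) - \tfrac{1}{\tau}\mathrm{Tr}(H),
\qquad f_\alpha = |z_0-a_\alpha|^2,
\end{equation*}
satisfies $F(UHU^*)=F(H)$ for every unitary $U$, since $\log\det(f_\alpha\mathbb{I}_n+UHU^*)=\log\det(f_\alpha\mathbb{I}_n+H)$ and the trace is unitarily invariant. Hence $F(H)$ depends only on the eigenvalues $h_1,\dots,h_n\ge 0$ of $H$, and it suffices to maximise over diagonal non-negative matrices $H=\mathrm{diag}(h_1,\dots,h_n)$.

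In that case $F$ separates as $F(H)=\sum_{j=1}^n \phi(h_j)$, where
\begin{equation*}
\phi(h) := \sum_{\alpha=1}^t c_\alpha \log(f_\alpha+h) - \tfrac{h}{\tau}.
\end{equation*}
A direct computation gives
\begin{equation*}
\phi'(h) = \sum_{\alpha=1}^t \frac{c_\alpha}{f_\alpha+h} - \frac{1}{\tau},
\qquad
\phi''(h) = -\sum_{\alpha=1}^t \frac{c_\alpha}{(f_\alpha+h)^2} < 0,
\end{equation*}
so $\phi$ is strictly concave on $[0,\infty)$. The assumption $P_{00}(z_0)>1/\tau$ gives $\phi'(0)>0$, while $\phi'(h)\to -1/\tau<0$ as $h\to\infty$, so there is a unique $h=t_0>0$ at which $\phi'$ vanishes; this is precisely the defining equation \eqref{bulkparameter} for $t_0$. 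Since $\phi(h)\to-\infty$ as $h\to\infty$ and $\phi$ is continuous on $[0,\infty)$, by strict concavity $\phi$ attains its maximum uniquely at $h=t_0$.

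Combining these facts, each $h_j$ must equal $t_0$, so the maximum of $F$ over diagonal non-negative $H$ is attained only at $H=t_0\mathbb{I}_n$, and by unitary invariance this is the unique maximiser over all non-negative definite $H$. There is no real obstacle here; the only point that needs a line of care is verifying that $\phi'(0)>0$, which is exactly the bulk condition built into the hypothesis, and that strict concavity of $\phi$ upgrades the critical-point characterisation to a genuine unique maximum.
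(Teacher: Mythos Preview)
Your proof is correct and follows essentially the same approach as the paper: the paper simply remarks that it suffices to treat the $n=1$ case and then check the first and second derivatives, which is exactly your reduction via unitary invariance and the strict concavity analysis of $\phi$. Your write-up is just a more detailed execution of that sketch.
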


\begin{proof}
Obviously, it's sufficient to check   the $n=1$ case. For this, take the first and second derivatives,  find the saddle point and we can  easily complete the proof.   
\end{proof}

\begin{lemma}\label{maximum lemma}
Assume that  $z_0$ is not  an eigenvalue of   the   $n\times n$   complex normal   matrix  $D$,  let 
\begin{small}
\begin{equation*}
\begin{aligned}
J_n&=\sum_{\alpha=1}^t\Big(
\tau c_{\alpha}\log\det\big(
T_{\alpha}T_{\alpha}^*
\big)-|z_0-a_{\alpha}|^2{\rm Tr}\big(
T_{\alpha}T_{\alpha}^*
\big)
\Big)
+|z_0|^2 {\rm Tr}\Big(
\sum_{\alpha=1}^t T_{\alpha}T_{\alpha}^*
+AA^*+BB^*
\Big)
\\&-{\rm Tr}\Big(
B\big(
z_0\mathbb{I}_{l_2}-D
\big)^*\big(
z_0\mathbb{I}_{l_2}-D
\big)B^*
\Big)
+\sum_{1\leq i<j\leq n}\left|
\Big(
\sum_{\alpha=1}^t a_{\alpha}T_{\alpha}T_{\alpha}^*
+z_0AA^*+BDB^*
\Big)_{i,j}
\right|^2,
\end{aligned}
\end{equation*}
\end{small}
where  as matrix-valued  variables
$T_{\alpha}$'s  are  $n\times n$  upper-triangular matrices  with  positive diagonal elements,   
$A$ and $B$  are   $n\times l_1$ and  $n\times l_2$  matrices respectively. Then 
\begin{equation*}
J_n\leq n\sum_{\alpha=1}^t \tau c_{\alpha}
\log\frac{\tau c_{\alpha}}{|z_0-a_{\alpha}|^2+t_0}
+n(t_0+|z_0|^2-\tau),
\end{equation*} 
whenever  
\begin{equation*}
\sum_{\alpha=1}^t T_{\alpha}T_{\alpha}^*+AA^*+BB^*
\leq \mathbb{I}_n,.
\end{equation*}
Moreover, the  equality holds  if and only if 
\begin{equation*} 
A=0,\quad
B=0, \quad T_{\alpha}=\sqrt{\frac{\tau c_{\alpha}}{|z_0-a_{\alpha}|^2+t_0}}
\mathbb{I}_n, \quad \alpha=1,  \ldots, t.
\end{equation*} 
\end{lemma}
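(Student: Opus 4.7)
I would treat this as a constrained optimization and use concavity of $\log$ scalarly at each diagonal index, combined with the manifest non-negativity of the remaining quadratic pieces. Because each $T_\alpha$ is upper triangular with positive diagonal, the two exact identities $\log\det(T_\alpha T_\alpha^*)=\sum_j\log t_{jj}^{(\alpha)}$ and $\mathrm{Tr}(T_\alpha T_\alpha^*)=\sum_j t_{jj}^{(\alpha)}+\|T_{{\rm u},\alpha}\|^2$ cleanly separate the ``diagonal'' quantities $t_{jj}^{(\alpha)}$ from the strictly upper triangular entries $T_{{\rm u},\alpha}$ as in \eqref{Talpha 2}. For each fixed $(\alpha,j)$ the scalar function $t\mapsto \tau c_\alpha\log t-(|z_0-a_\alpha|^2+t_0)\,t$ is strictly concave on $(0,\infty)$, with unique maximizer $t=\tau c_\alpha/(|z_0-a_\alpha|^2+t_0)$ and maximum value $\tau c_\alpha\log\tfrac{\tau c_\alpha}{|z_0-a_\alpha|^2+t_0}-\tau c_\alpha$. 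Summing over $j$ and $\alpha$ and using $\sum_\alpha \tau c_\alpha=\tau$ absorbs all the logarithmic pieces of $J_n$ into the logarithmic part of the claimed upper bound.

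Setting $M=\sum_\alpha T_\alpha T_\alpha^*+AA^*+BB^*$ and collecting the leftover linear terms, in particular using $\sum_\alpha \mathrm{Tr}(T_\alpha T_\alpha^*)=\mathrm{Tr}(M)-\mathrm{Tr}(AA^*+BB^*)$, the lemma then reduces to establishing the residual inequality
\begin{equation*}
\sum_{i<j}|X_{ij}|^2\;\le\;(t_0+|z_0|^2)\mathrm{Tr}(\mathbb{I}_n-M)+t_0\mathrm{Tr}(AA^*+BB^*)+\sum_\alpha(|z_0-a_\alpha|^2+t_0)\|T_{{\rm u},\alpha}\|^2+\mathrm{Tr}\!\big(B(z_0\mathbb{I}_{l_2}-D)^*(z_0\mathbb{I}_{l_2}-D)B^*\big),
\end{equation*}
where $X=\sum_\alpha a_\alpha T_\alpha T_\alpha^*+z_0 AA^*+BDB^*$. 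Every term on the right is non-negative by construction: the first by $M\le \mathbb{I}_n$, the middle two by positive semi-definiteness, and the last because the hypothesis that $z_0$ is not an eigenvalue of $D$ makes $z_0\mathbb{I}_{l_2}-D$ invertible.

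The hard part is precisely this off-diagonal majorization. My plan is to expand $(T_\alpha T_\alpha^*)_{ij}$ for $i<j$ along its Cholesky rows, $(T_\alpha T_\alpha^*)_{ij}=\sqrt{t_{jj}^{(\alpha)}}\,(T_\alpha)_{ij}+\sum_{k>j}(T_\alpha)_{ik}\overline{(T_\alpha)_{jk}}$, and expand $(AA^*)_{ij}$ and $(BDB^*)_{ij}$ analogously in terms of rows of $A$ and $B$; a weighted Cauchy--Schwarz estimate with weights $|z_0-a_\alpha|^2+t_0$, $t_0$, and the eigenvalues of $(z_0\mathbb{I}-D)^*(z_0\mathbb{I}-D)$, matching the coefficients on the right, should then reproduce the stated inequality after summation over $i<j$. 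The defining relation $\sum_\alpha \tau c_\alpha/(|z_0-a_\alpha|^2+t_0)=1$ is what makes the weights telescope correctly; checking this bookkeeping is the delicate step. The equality analysis then follows by tracing equalities back: strict concavity forces $t_{jj}^{(\alpha)}=\tau c_\alpha/(|z_0-a_\alpha|^2+t_0)$ for all $j$, saturation of $M\le \mathbb{I}_n$ together with the Cauchy--Schwarz equality cases force $T_{{\rm u},\alpha}=0$, $A=0$, $B=0$, and combined these give the unique maximizer $T_\alpha=\sqrt{\tau c_\alpha/(|z_0-a_\alpha|^2+t_0)}\,\mathbb{I}_n$.
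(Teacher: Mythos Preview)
The paper does not actually prove this lemma here; it simply cites \cite{LZ23}, Lemma~2.5. So there is no in-paper argument to compare against, and your write-up must stand on its own.

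Your reduction is correct and clean: the identities $\log\det(T_\alpha T_\alpha^*)=\sum_j\log t_{jj}^{(\alpha)}$ and $\mathrm{Tr}(T_\alpha T_\alpha^*)=\sum_j t_{jj}^{(\alpha)}+\|T_{{\rm u},\alpha}\|^2$, together with the scalar concavity of $t\mapsto \tau c_\alpha\log t-(|z_0-a_\alpha|^2+t_0)t$, do reduce the lemma exactly to your ``residual inequality''
\[
\sum_{i<j}|X_{ij}|^2\;\le\;(t_0+|z_0|^2)\,\mathrm{Tr}(\mathbb{I}_n-M)+t_0\,\mathrm{Tr}(AA^*+BB^*)+\sum_\alpha(|z_0-a_\alpha|^2+t_0)\|T_{{\rm u},\alpha}\|^2+\mathrm{Tr}\!\big(B(z_0\mathbb{I}-D)^*(z_0\mathbb{I}-D)B^*\big),
\]
and the equality analysis you give is right once this is established.

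The gap is that you have not proved this residual inequality, and the ``weighted Cauchy--Schwarz'' sketch, as stated, is not enough. The only way the constraint $M\le\mathbb{I}_n$ enters your right-hand side is through the scalar quantity $\mathrm{Tr}(\mathbb{I}_n-M)\ge 0$; but the inequality is \emph{false} under that weaker hypothesis alone. For instance, with $n=2$, $t=1$, $A=B=0$, $z_0=1$, $a_1=10$, $t_0=1$ (so $\tau=82$), taking $t_{11}=0$, $t_{22}=1$, $|t_{12}|^2=1$ gives $\mathrm{Tr}(M)=2$ but $|X_{12}|^2=100$ while the right-hand side equals $82$. What rescues the genuine inequality is the full positive-semidefiniteness of $\mathbb{I}_n-M$, which in this $2\times2$ case forces $|t_{12}|^2\le(1-t_{11})(1-t_{22})$ and kills the bad configuration. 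Your Cauchy--Schwarz plan, applied termwise to $X_{ij}=\sum_m\theta_m(\mathbf u_{ij})_m\overline{(\mathbf w_j)_m}$, naturally produces a factor $\|\mathbf w_j\|^2=M_{jj}\le 1$ but does not see the off-diagonal part of the constraint; and on the $\mathbf u_{ij}$ side the $A$- and $B$-contributions do not depend on $j$, so the sum over $j$ overcounts them. So either a much more delicate choice of weights that builds in the full operator constraint, or a different mechanism altogether (the triangular structure is tailor-made for induction on $n$, peeling off the last row and column), is needed. As written, the ``delicate step'' you flag is in fact the whole substance of the lemma, and it is not yet done.
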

For proof see \cite[Lemma 2.5]{LZ23}.

\section{Proofs of Proposition \ref{RNn delta} and Theorem \ref{2-complex-correlation}}  \label{proofs}
\subsection{Proof of Proposition \ref{RNn delta}}\label{propfo}
\begin{proof}[Proof of Proposition \ref{RNn delta}.] 

\hspace*{\fill}

By  Lemma \ref{maximumY}  and Lemma  \ref{maximum lemma}  in Section \ref{Sectmax} below,   we see that $f_0(\tau;T,Y,Q)$ in \eqref{f0TY}  attains its maximum  only at  
\begin{equation}\label{maximumpoint}
Y=\sqrt{t_0}U_1U_2, \ Q_0=0, \ Q_{t+1}=0, \quad T_{\alpha}=\sqrt{\tau c_{\alpha}/(f_{\alpha}+t_0})\mathbb{I}_n,\quad \alpha=1,\cdots,t. 
\end{equation}
So  there exists $\Delta>0$ such that 
\begin{equation}\label{f0TY minus f0}
f_0(\tau;T,Y,Q)-f_0\big(\tau;\big\{\frac{\tau c_{\alpha}}{f_{\alpha}+t_0}\mathbb{I}_n\big\},\sqrt{t_0}U_1U_2,0\big)\leq -\Delta
\end{equation}
in  the domain $\Omega\cap
A_{N,\delta }^{c}$. Here it should be noted that we have taken $Y$ to be its representation of singular value decomposition \eqref{singularcorre}, while for simplicity we still use the symbol $Y$ in the following argument.

On one hand,  noting that  $$ \sum_{\alpha=1}^t
T_{\alpha}T_{\alpha}^*+Q_0Q_0^*+Q_{t+1}Q_{t+1}^*\leq \mathbb{I}_n$$
in the domain   $\Omega$ defined by  \eqref{Omega}, 
we know  for 
$\hat{Z}$
in a compact subset of $\mathbb{C}^n$ that  
 the partial  sub-leading terms of $f(T,Y,Q)$ in  \eqref{fTY} can be   bounded  by  $O(N^{-\frac{1}{2}})$, that is, 
 \begin{small}
\begin{equation}\label{error term1}
\begin{aligned}
&N^{-\frac{1}{2}}\sum_{\alpha=1}^t 
\left(
\overline{a}_{\alpha}{\rm Tr}\big(
\hat{Z} T_{\alpha}T_{\alpha}^*
\big)+a_{\alpha}{\rm Tr}\big(
\hat{Z}^* T_{\alpha}T_{\alpha}^*
\big)
\right) 
+N^{-\frac{1}{2}}
\left(
\overline{z}_0{\rm Tr}\big(
\hat{Z} Q_0Q_0^*
\big)+z_0{\rm Tr}\big(
\hat{Z}^* Q_0Q_0^*
\big)\right)
\\
&+N^{-\frac{1}{2}}
\left(
{\rm Tr}\big(
\hat{Z} Q_{t+1}A_{t+1}^*Q_{t+1}^*
\big)+{\rm Tr}\big(
\hat{Z}^* Q_{t+1}A_{t+1}Q_{t+1}^*
\big)
\right)
=O(N^{-\frac{1}{2}}).
\end{aligned}
\end{equation}
\end{small}
On the other hand,  apply the singular value decomposition \eqref{singularcorre} for $Y$ and note that in \eqref{falpha} all $f_{\alpha}\not=0$ because we have assumed that $a_{\alpha}\not=z_0$ for $\alpha=1,\cdots,t$ in Theorem \ref{2-complex-correlation}, we obtain 
\begin{small}
\begin{equation*}\label{HAalpha analysis}
\begin{aligned}
&(
\sqrt{\gamma_N}f_{\alpha}\mathbb{I}_n+Y^*Y)^{-1}=O(1),\quad Y(\sqrt{\gamma_N}f_{\alpha}\mathbb{I}_n+Y^*Y)^{-1}=O(1),
\end{aligned}
\end{equation*}
\end{small}
and 
\begin{small}
\begin{equation}\label{error term2}
\log\det\left(
\mathbb{I}_{2n}+\sqrt{\gamma_N} N^{-\frac{1}{2}}
\widehat{A}_{\alpha}
\right)=O(N^{-\frac{1}{2}})
\end{equation}
\end{small}
since $\widehat{A}_{\alpha}=O(1)$. 
Meanwhile, 
\begin{small}
\begin{equation*}
\log\det\left(
\gamma_N f_{\alpha}\mathbb{I}_{n}+
YY^*
\right)=\log\det\left(
f_{\alpha}\mathbb{I}_{n}+
YY^*
\right)+ O\big( N^{-1} \big).
\end{equation*}
\end{small}
Combination of  \eqref{fTY}, \eqref{logAalpha decompose}, \eqref{f0TY}, \eqref{error term1} and \eqref{error term2}  give rise to 
\begin{small}
\begin{equation}\label{fTY minus f00}
\begin{aligned}
&N\Big(
 f(T,Y,Q)-f_0\big(\tau;\big\{\frac{\tau c_{\alpha}}{f_{\alpha}+t_0}\mathbb{I}_n\big\},\sqrt{t_0}U_1U_2,0\big)
\Big)=
\\
&N\Big(
f_0(\tau;T,Y,Q)-f_0\big(\tau;\big\{\frac{\tau c_{\alpha}}{f_{\alpha}+t_0}\mathbb{I}_n\big\},\sqrt{t_0}U_1U_2,0\big)
\Big)+\frac{n}{\tau}{\rm Tr}(\Lambda)+O\big(
\sqrt{N}
\big).
\end{aligned}
\end{equation}
\end{small}

For $g(T,Y,Q)$ in \eqref{gYUT},  all factors can be controlled by a  polynomial function, so by taking a sufficiently large $N_0>\sum_{\alpha=1}^t (n-R_{\alpha,N})/c_{\alpha}$ we know 
\begin{small}
\begin{equation*}
\begin{aligned}
&\int_{A_{N,\delta}^{c}\cap \Omega}
| g(T,Y,Q) |
\prod_{1\leq i<j\leq n}(\lambda_j-\lambda_i)^2
\\&\times
\exp\left\{ N_0\Big(
 f_0(\tau;T,Y,Q)-f_0\big(\tau;\big\{\frac{\tau c_{\alpha}}{f_{\alpha}+t_0}\mathbb{I}_n\big\},\sqrt{t_0}U_1U_2,0\big)
\Big)+\frac{n}{\tau}{\rm Tr}(\Lambda) \right\}
{\rm d}V
=O(1).
\end{aligned}
\end{equation*}
\end{small}
Hence,
\begin{small}
\begin{equation*}
\begin{aligned}
&\left|
\int_{A_{N,\delta}^{\complement}\cap \Omega}
g(T,Y,Q)
\prod_{1\leq i<j\leq n}(\lambda_j-\lambda_i)^2
 \exp\left\{ N\Big(
 f(T,Y,Q)-f_0\big(\tau;\big\{\frac{\tau c_{\alpha}}{f_{\alpha}+t_0}\mathbb{I}_n\big\},\sqrt{t_0}U_1U_2,0\big)
\Big) \right\}
{\rm d}V
\right|   \\
&\leq e^{-(N-N_0)\Delta+O(\sqrt{N})}
 O(1)    =O\big(
e^{-\frac{1}{2}N\Delta}
\big).
\end{aligned}
\end{equation*}
\end{small}

This  thus completes the proof.
\end{proof}

 \subsection{Proof of Theorem \ref{2-complex-correlation}}\label{theorempfo}
The goal in this section is to complete the proof of Theorem \ref{2-complex-correlation}. With Section \ref{sectnotation} in mind, we still need some extra notations: For a family of matrices $\{ T_{{\rm u},\alpha}|\alpha=1,\cdots,t\}$, define 
     $\|T_{\rm u}\|:=\sum_{\alpha=1}^t \|T_{{\rm u},\alpha}\|,\quad    
     \| T_{\rm u}\|_{2} :=\sum_{\alpha=2}^t
 \| T_{{\rm u},\alpha}\|.$ 
 Similarly,  For a family of matrices $\{ \widetilde{T}_{{\rm d},\alpha}|\alpha=1,\cdots,t\}$,
$ 
  \|\widetilde{T}_{{\rm d}}\|:=\sum_{\alpha=1}^t
 \| \widetilde{T}_{{\rm d},\alpha}\|, \quad  
 \| 
  \widetilde{T}_{{\rm d}}\|_{2} :=\sum_{\alpha=2}^t
 \| 
  \widetilde{T}_{{\rm d},\alpha}\|.
$ For a complex matrix $M$, denote $M^{({\rm d})}$ the diagonal part of M, and $M^{({\rm off},{\rm u})}$ the strictly upper triangular part of M; If the object is $M_{\alpha}$ with index $\alpha$, then denote the corresponding quantity as $M^{(\alpha,{\rm d})}$ and $M^{(\alpha,{\rm off},{\rm u})}$.

In view of Proposition \ref{RNn delta}, we will restrict all the matrix variables in the region $\Omega_{N,\delta}$ defined in \eqref{regiondelta} for sufficiently small $\delta>0$. Here we must emphasize that $\delta$ is fixed independent of $N$, while we need $\delta>0$ to be sufficiently small so that Taylor expansions of the relevant matrix variables are permitted. As the number of relevant matrix variables is certainly finite,  the required $\delta$ can always be realized. In addition, in the following analysis, all error terms are written as the form $O(A)$, 
which means the error terms are bounded by $CA$ for some positive constant $C$ which is independent of $N$.

{\bf Step 1: Taylor expansion of $f(T,Y,Q)$.} 
According to \eqref{INdelta}, on the exponent, for the term \begin{small}
$f(T,U_1\sqrt{\Lambda}U_2,Q)
  -f_0\big(\tau;\big\{\frac{\tau c_{\alpha}}{f_{\alpha}+t_0}\mathbb{I}_n\big\},\sqrt{t_0}U_1U_2,0\big)$\end{small}, when we have done Taylor expansions of the relevant matrix variables, we need to do appropriate rescaling to cancel out the factor $N$, see \eqref{change scale}. Accordingly we introduce the notation of "typical size" of each term, by means of the rescaling. For example, in \eqref{change scale}, we make the change of variables \begin{small}
 $ (Q_0,Q_{t+1})\rightarrow
 N^{-\frac{1}{2}}(Q_0,Q_{t+1})$
  \end{small}, so the typical sizes of $\|Q_0\|$ and $\|Q_{t+1}\|$ are $N^{-\frac{1}{2}}$, and the typical size of $\|Q_0\|\|Q_{t+1}\|$ is $N^{-1}$. {\bf The terms of typical size larger than or equal to $N^{-1}$ should be reserved, otherwise the terms of typical size smaller than or equal to $N^{-\frac{3}{2}}$ should be considered as error terms}, which can be cancelled out throughout standard method. 

With the notation $f_{\alpha}$ in \eqref{falpha}   
 noticing the  integration domain and   \eqref{SYUT}    the decomposition of     $T_{\alpha}$   as  sum of  a diagonal matrix $\sqrt{T_{{\rm d},\alpha}}$ and a strictly upper triangular  matrix ${T_{{\rm u},\alpha}}$ as in \eqref{Talpha 2}, 
  introduce new matrix variables 
\begin{equation}\label{Tdalpha1}
T_{{\rm d},\alpha}=\frac{\tau c_{\alpha}}{f_{\alpha}+t_0}
\mathbb{I}_n+\widetilde{T}_{{\rm d},\alpha},
\end{equation}
It's easy to obtain
\begin{equation}\label{Talphaexpansion1}
\begin{aligned}
c_{\alpha}{\rm Tr}\log T_{{\rm d},\alpha}
&-\frac{1}{\tau} f_{\alpha}{\rm Tr}(T_{{\rm d},\alpha})
=
nc_{\alpha}\big(
\log \frac{\tau c_{\alpha}}{f_{\alpha}+t_0}-
\frac{f_{\alpha}}{f_{\alpha}+t_0}
\big)
\\
&+\frac{1}{\tau}t_0{\rm Tr}\big(\widetilde{T}_{{\rm d},\alpha}\big)
-\frac{(f_{\alpha}+t_0)^2}{2\tau^2 c_{\alpha}}{\rm Tr}\big(
\widetilde{T}_{{\rm d},\alpha}^2\big)
+O\big(
\|
\widetilde{T}_{{\rm d},\alpha}
\|^3
\big),
\end{aligned}
\end{equation}
\begin{equation*}
\sqrt{T_{{\rm d},\alpha}}=\sqrt{\frac{\tau c_{\alpha}}{f_{\alpha}+t_0}}
\left(
\mathbb{I}_n+\frac{f_{\alpha}+t_0}{2\tau c_{\alpha}}\widetilde{T}_{{\rm d},\alpha}
+O\big(
\|
\widetilde{T}_{{\rm d},\alpha}
\|^2
\big)
\right),
\end{equation*}
\begin{equation*}
\begin{aligned}
T_{\alpha}T_{\alpha}^*
&=\frac{\tau c_{\alpha}}{f_{\alpha}+t_0}\mathbb{I}_n+\widetilde{T}_{{\rm d},\alpha}+
\sqrt{\frac{\tau c_{\alpha}}{f_{\alpha}+t_0}}
\left(
\mathbb{I}_n+\frac{f_{\alpha}+t_0}{2\tau c_{\alpha}}\widetilde{T}_{{\rm d},\alpha}
+O\big(
\|
\widetilde{T}_{{\rm d},\alpha}
\|^2
\big)
\right)
T_{{\rm u},\alpha}^*     
\\
&+\sqrt{\frac{\tau c_{\alpha}}{f_{\alpha}+t_0}}
T_{{\rm u},\alpha}
\left(
\mathbb{I}_n+\frac{f_{\alpha}+t_0}{2\tau c_{\alpha}}\widetilde{T}_{{\rm d},\alpha}
+O\big(
\|
\widetilde{T}_{{\rm d},\alpha}
\|^2
\big)\right)
+T_{{\rm u},\alpha}T_{{\rm u},\alpha}^*,
\end{aligned}
\end{equation*}
from which we have for $i<j$,
\begin{equation}\label{Talphaexpansion4}
\left(
\sum_{\alpha=1}^t a_{\alpha}T_{\alpha}T_{\alpha}^*
\right)_{i,j}
=\sum_{\alpha=1}^t a_{\alpha}\sqrt{\frac{\tau c_{\alpha}}{f_{\alpha}+t_0}}
t_{i,j}^{(\alpha)}
+O\left(
\| T_{{\rm u},\alpha} \|^2+\| T_{{\rm u},\alpha} \|
\| \widetilde{T}_{{\rm d},\alpha} \|
\right),
\end{equation} 
and
\begin{equation}\label{Talphaexpansion5}
{\rm Tr}\big(
\hat{Z}T_{\alpha}T_{\alpha}^*
\big)
=
\frac{\tau c_{\alpha}}{f_{\alpha}+t_0}
{\rm Tr}\big(
\hat{Z}
\big)+
{\rm Tr}\big(
\hat{Z}\widetilde{T}_{{\rm d},\alpha}
\big)+O\left(
\|
T_{{\rm u},\alpha}
\|^2
\right).
\end{equation}
At the same time,
\begin{equation*}
\left(
z_0Q_0Q_0^*+Q_{t+1}A_{t+1}Q_{t+1}^*
\right)_{i,j}
=
O\left(
\| Q_0 \|^2+
\| Q_{t+1} \|^2
\right).
\end{equation*}
Moreover, we see from the condition $\sum_{\alpha=1}^t \frac{\tau c_{\alpha}}{f_{\alpha}+t_0}=1$ that  the restriction condition in  \eqref{Omega} becomes
\begin{equation*}
\sum_{\alpha=1}^t \widetilde{T}_{{\rm d},\alpha}+
\sum_{\alpha=1}^t \sqrt{\frac{\tau c_{\alpha}}{f_{\alpha}+t_0}}
T_{{\rm u},\alpha}+
\sum_{\alpha=1}^t \sqrt{\frac{\tau c_{\alpha}}{f_{\alpha}+t_0}}
T_{{\rm u},\alpha}^*+S\leq 0,
\end{equation*}
where
\begin{equation}\label{S}
\begin{aligned}
S&=\sum_{\alpha=1}^t T_{{\rm u},\alpha}T_{{\rm u},\alpha}^*
+Q_0Q_0^*+Q_{t+1}Q_{t+1}^*   \\
&+\sum_{\alpha=1}^t T_{{\rm u},\alpha}\Big(
\sqrt{T_{{\rm d},\alpha}}-
\sqrt{\frac{\tau c_{\alpha}}{f_{\alpha}+t_0}}\mathbb{I}_n
\Big)+
\sum_{\alpha=1}^t \Big(
\sqrt{T_{{\rm d},\alpha}}-
\sqrt{\frac{\tau c_{\alpha}}{f_{\alpha}+t_0}}\mathbb{I}_n
\Big)T_{{\rm u},\alpha}^*,
\end{aligned}
\end{equation}
the first line contains all diagonal parts of $S$, while the second line only contains the non-diagonal parts.

 Let  $S^{({\rm d})}$ be a  diagonal  matrix extracted   from the diagonal part of $S$ given  in \eqref{S},  introduce a new diagonal matrix   $\Gamma_1$ and make change of variables  from 
 $\widetilde{T}_{{\rm d},1}, \widetilde{T}_{{\rm d},2}, \cdots, \widetilde{T}_{{\rm d},t}$ to 
\begin{equation}\label{matrix transformations1}
\begin{aligned}
&\Gamma_1:=\sum_{\alpha=1}^t    \widetilde{T}_{{\rm d},\alpha}+S^{({\rm d})},
\quad \widetilde{T}_{{\rm d},2},\, \ldots, \widetilde{T}_{{\rm d},t}.
\end{aligned}
\end{equation}
Note that $S^{({\rm d})}$ doesn't depend on $\widetilde{T}_{{\rm d},\alpha}$, so this transformation is linear and
\begin{equation}\label{Sdexpan}
S^{({\rm d})}=
O\big(
\| T_{{\rm u}} \|^2+\| Q_{t+1} \|^2+\| Q_0 \|^2
\big).
\end{equation}
We will see that  the typical size for  $Q_0, Q_{t+1}, \widetilde{T}_{{\rm d},2},\, \ldots, \widetilde{T}_{{\rm d},t}$ are $N^{-1/2}$   while it  is  $N^{-1}$ for  $\Gamma_1$. See \eqref{change scale} for the exact meaning. 

Also let  $S^{({\rm off},{\rm u})}$ be a strictly upper triangular matrix extracted from the strictly upper triangular part of $S$ given  in \eqref{S},  introduce a new strictly upper triangular matrix   $G_1$ and make change of variables  from 
 $T_{{\rm u},1}, T_{{\rm u},2}, \cdots, T_{{\rm u},t}$ to 
\begin{equation}\label{matrix transformations2}
\begin{aligned}
&G_1:=\sum_{\alpha=1}^t  \sqrt{\frac{\tau c_{\alpha}}{f_{\alpha}+t_0}}
T_{{\rm u},\alpha}+S^{({\rm off},{\rm u})},
\quad T_{{\rm u},2},\, \ldots,T_{{\rm u},t}.
\end{aligned}
\end{equation}
We will see that  the typical size for $T_{{\rm u},2}, \cdots, T_{{\rm u},t}$  are $N^{-1/2}$   while it  is  $N^{-1}$ for  $G_1$. See \eqref{change scale} for the exact meaning. So we will do Taylor expansions up to some proper orders  accordingly.

As to the change of variables   \eqref{matrix transformations2}, the Jacobian determinant reads
\begin{equation*}
\det\Big(
\frac{\partial T_{{\rm u},1}}{\partial G_1}
\Big)
=\left(
\frac{f_1+t_0}{\tau c_1}
\right)^{\frac{n(n-1)}{2}}+
O\big(
\| T_{{\rm u},1} \|+\| \widetilde{T}_{{\rm d}} \|
\big).
\end{equation*}
At the same time, 
\begin{equation}\label{Tdestimate0}
\| \widetilde{T}_{{\rm d}} \|=O\big(
\| \widetilde{T}_{{\rm d}} \|_2+\|\Gamma_1\|+\| Q_0 \|
+\| T_{{\rm u}} \|+\| Q_{t+1} \|
\big).
\end{equation}
Here exactly, from \eqref{matrix transformations1} and \eqref{Sdexpan} we should have
\begin{equation*}
\| \widetilde{T}_{{\rm d}} \|=O\big(
\| \widetilde{T}_{{\rm d}} \|_2+\|\Gamma_1\|+\| Q_0 \|^2
+\| T_{{\rm u}} \|^2+\| Q_{t+1} \|^2
\big),
\end{equation*}
while as \eqref{ANdelta} and \eqref{INdelta} we have
\begin{equation}\label{deltacontrol}
 \| T_{{\rm u}} \|+\| \widetilde{T}_{{\rm d}} \|+\| Q_0 \|+
\| Q_{t+1} \|
=O\big( \sqrt{\delta} \big).
\end{equation}
As $\delta$ is fixed \eqref{Tdestimate0} is right. Moreover, \eqref{Tdestimate0} already is sufficient for our analysis. From 
\begin{equation*}
S^{({\rm off},{\rm u})}=O\Big(
\| T_{{\rm u}} \|\big(
\| T_{{\rm u}} \|+\| \widetilde{T}_{{\rm d}} \|
\big)+\| Q_{t+1} \|^2+\| Q_0 \|^2
\Big),
\end{equation*}
we have
\begin{equation*}
\| T_{{\rm u}} \|=O\big(
\| G_1 \|+\|T_{{\rm u}}\|_2+\| Q_0 \|
+\| Q_{t+1} \|
\big)+\| T_{{\rm u}} \|O\big(
\| T_{{\rm u}} \|+\| \widetilde{T}_{{\rm d}} \|
\big).
\end{equation*}
From \eqref{deltacontrol} and as $\delta>0$ is sufficiently small, we have
 \begin{equation}\label{Tuestimate}
\| T_{{\rm u}} \|=O\big(
\| G_1 \|+\|T_{{\rm u}}\|_2+\| Q_0 \|
+\| Q_{t+1} \|
\big).
\end{equation}
Substitute \eqref{Tuestimate} into \eqref{Tdestimate0}, we have
\begin{equation*}
\| \widetilde{T}_{{\rm d}} \|=O\big(
\Omega_{{\rm error}}
\big),
\end{equation*}
where
\begin{equation}\label{Omegaerror}
\Omega_{{\rm error}}:=\| \widetilde{T}_{{\rm d}} \|_2+\|\Gamma_1\|
+\| G_1 \|+\|T_{{\rm u}}\|_2
+\| Q_0 \|+\| Q_{t+1} \|,
\end{equation}
and further
\begin{small}
\begin{equation}\label{Jacobiandet}
\det\Big(
\frac{\partial T_{{\rm u},1}}{\partial G_1}
\Big)
=\left(
\frac{f_1+t_0}{\tau c_1}
\right)^{\frac{n(n-1)}{2}}+O(\Omega_{{\rm error}}).
\end{equation}
\end{small}
It should be noted that $\Omega_{{\rm error}}$ plays a central role in representation of the error terms, according to \eqref{change scale}, its typical size is $N^{-\frac{1}{2}}$.

In view of \eqref{fTY} and \eqref{hQrewrite},
\begin{small}
\begin{equation}\label{FTYlinear1}
\frac{|z_0|^2}{\tau}\Big(
\sum_{\alpha=1}^t {\rm Tr}(T_{\alpha}T_{\alpha}^*)+
{\rm Tr}(Q_{0}Q_{0}^*)+{\rm Tr}(Q_{t+1}Q_{t+1}^*)
\Big)=\frac{n|z_0|^2}{\tau}+\frac{|z_0|^2}{\tau}{\rm Tr}(\Gamma_1).
\end{equation}
\end{small}
Also in view of \eqref{Talphaexpansion1}, 
\begin{small}
\begin{equation}\label{FTYlinear2}
\frac{t_0}{\tau}\sum_{\alpha=1}^t 
{\rm Tr}\big(\widetilde{T}_{{\rm d},\alpha}\big)=
\frac{t_0}{\tau}{\rm Tr}(\Gamma_1)
-\frac{t_0}{\tau}{\rm Tr}(S^{({\rm d})}).
\end{equation}
\end{small}
From \eqref{S}, we have
\begin{equation}\label{FTYlinear3}
-\frac{t_0}{\tau}{\rm Tr}(S^{({\rm d})})=
-\frac{t_0}{\tau}\sum_{\alpha=1}^t 
{\rm Tr}\big(T_{{\rm u},\alpha}T_{{\rm u},\alpha}^*\big)
-\frac{t_0}{\tau}{\rm Tr}\big( Q_0Q_0^* \big)
-\frac{t_0}{\tau}{\rm Tr}\big( Q_{t+1}Q_{t+1}^* \big).
\end{equation}

In view of \eqref{fTY}, \eqref{hQrewrite} and \eqref{Talphaexpansion4}, from transformations \eqref{matrix transformations1} and \eqref{matrix transformations2} we have
\begin{small}
\begin{equation}\label{triangle1}
\begin{aligned}
&-\sum_{\alpha=1}^t \frac{f_{\alpha}+t_0}{\tau}
{\rm Tr}\big(
T_{{\rm u},\alpha}T_{{\rm u},\alpha}^*
\big)+\frac{1}{\tau}
\sum_{1\leq i < j\leq n}
 \left|
 \left(
 \sum_{\alpha=1}^t a_{\alpha}T_{\alpha}T_{\alpha}^*
+z_0
Q_0Q_0^*+
Q_{t+1}A_{t+1}Q_{t+1}^*
 \right)_{i,j} 
 \right|^2       \\
 &=-\frac{(f_1+t_0)^2}{\tau^2 c_1}
 {\rm Tr}\left(
\sum_{\alpha=2}^t
 \sqrt{\frac{\tau c_{\alpha}}{f_{\alpha}+t_0}}T_{{\rm u},\alpha}
 \right)
 \left(
 \sum_{\alpha=2}^t
 \sqrt{\frac{\tau c_{\alpha}}{f_{\alpha}+t_0}}T_{{\rm u},\alpha}
 \right)^*  -\sum_{\alpha=2}^t
\frac{f_{\alpha}+t_0}{\tau}{\rm Tr}\big(
T_{{\rm u},\alpha}T_{{\rm u},\alpha}^*
\big)             \\
&     
+
\frac{1}{\tau}
\sum_{1\leq i < j\leq n}
 \left|
\sum_{\alpha=2}^t
\sqrt{\frac{\tau c_{\alpha}}{f_{\alpha}+t_0}}
(a_{\alpha}-a_1)
t_{i,j}^{(\alpha)}
 \right|^2+O(\Omega_{{\rm error}}^3+\|G_1\|\Omega_{{\rm error}}),
\end{aligned}
\end{equation}
\end{small}
and
\begin{small}
\begin{equation}\label{diagonal1}
\begin{aligned}
&-\sum_{\alpha=1}^t\frac{(f_{\alpha}+t_0)^2}{2\tau^2 c_{\alpha}}
{\rm Tr}
\left(
\widetilde{T}_{{\rm d},\alpha}^2
\right)
+\frac{|z_0|^2}{\tau}{\rm Tr}(\Gamma_1)
+\frac{N^{-\frac{1}{2}}}{\tau}
\sum_{\alpha=1}^t\left(
\overline{a}_{\alpha}{\rm Tr}\big(
\hat{Z}\widetilde{T}_{{\rm d},\alpha}
\big)
+a_{\alpha}{\rm Tr}\big(
\hat{Z}^*\widetilde{T}_{{\rm d},\alpha}
\big)
\right)+\frac{t_0}{\tau}{\rm Tr}(\Gamma_1)             \\
&=\frac{|z_0|^2+t_0}{\tau}{\rm Tr}(\Gamma_1)
-\frac{(f_{1}+t_0)^2}{2\tau^2 c_{1}}{\rm Tr}
\left(
 \sum_{\alpha=2}^t
\widetilde{T}_{{\rm d},\alpha}
 \right)^2
 -\sum_{\alpha=2}^t\frac{(f_{\alpha}+t_0)^2}{2\tau^2 c_{\alpha}}
{\rm Tr}\big(\widetilde{T}_{{\rm d},\alpha}^2  \big)  
   \\
&+\frac{N^{-\frac{1}{2}}}{\tau}\sum_{\alpha=2}^t
{\rm Tr}\Big(
\big(\overline{a_{\alpha}-a_1}\hat{Z}+
(a_{\alpha}-a_1)\hat{Z}^*\big)
 \widetilde{T}_{{\rm d},\alpha}
\Big)+O\Big(
\big(
\Omega_{{\rm error}}+N^{-\frac{1}{2}}
\big)
\big(
\Omega_{{\rm error}}^2+\|\Gamma_1\|
\big)
\Big).
\end{aligned}
\end{equation}
\end{small}
Also, the integral region now becomes simply
\begin{equation}\label{SimpleRegion}
\Gamma_1+G_1+G_1^*\leq 0.
\end{equation}

In view of \eqref{logAalpha decompose},
\begin{small}
\begin{equation}\label{logdetHA}
\begin{aligned}
\log\det\left(
\mathbb{I}_{2n}+\sqrt{\gamma_N}
N^{-\frac{1}{2}}\widehat{A}_{\alpha}
\right)
=N^{-\frac{1}{2}}{\rm Tr}\big(
\widehat{A}_{\alpha}
\big)-\frac{1}{2}N^{-1}{\rm Tr}\big(
\widehat{A}_{\alpha}^2
\big)
+O\big(
N^{-\frac{3}{2}}
\big).
\end{aligned}
\end{equation}
\end{small}
 Now start from \eqref{singularcorre}, we need to take a shift
\begin{equation}\label{tildeLambda}
\widetilde{\Lambda}=
\Lambda-t_0\mathbb{I}_n={\rm diag}\left( \widetilde{\lambda}_1,\cdots,\widetilde{\lambda}_n \right),\quad
\widetilde{\lambda}_1\geq \cdots \geq \widetilde{\lambda}_n \geq 0,
\end{equation}
Combining \eqref{logAalpha decompose}, \eqref{HAalpha} and \eqref{logdetHA}, and introducing $P_0$ defined in \eqref{parameter}, we have
\begin{small}
\begin{equation}\label{finY}
\begin{aligned}
&{\rm Tr}(YY^*)-\sum_{\alpha=1}^t
c_{\alpha}\log\det(A_{\alpha})=\frac{n t_0}{\tau}
-\frac{n^2}{N}-n\sum_{\alpha=1}^tc_{\alpha}\log (f_{\alpha}+t_0)
\\
&+\frac{N^{-1}}{2}
\sum_{\alpha=1}^t\frac{c_{\alpha}}{(f_{\alpha}+t_0)^2}
\left(
\overline{z_0-a_{\alpha}}^2{\rm Tr}\big(
\hat{Z}^2
\big)+
(z_0-a_{\alpha})^2{\rm Tr}\big(
\hat{Z}^*
\big)^2-2t_0 U_1U_2\hat{Z}U_2^*U_1^*\hat{Z}^*
\right)    \\
&-N^{-\frac{1}{2}}\sum_{\alpha=1}^t\frac{c_{\alpha}}{f_{\alpha}+t_0}
\left(
\overline{z_0-a_{\alpha}}
{\rm Tr}\big(
\hat{Z}
\big)+
(z_0-a_{\alpha})
{\rm Tr}\big(
\hat{Z}^*
\big)
\right)   
 +\frac{P_1}{2} {\rm Tr}\big(
\widetilde{\Lambda}^2
\big) \\
&-N^{-\frac{1}{2}}\left(
\overline{P_0}{\rm Tr}\big(
U_2^*\widetilde{\Lambda}U_2\hat{Z}
\big)+P_0{\rm Tr}\big(
U_1\widetilde{\Lambda}U_1^*\hat{Z}^*
\big)
\right)+O\Big(\big(
N^{-\frac{1}{2}}+\| \widetilde{\Lambda} \|
\big)^3\Big).
\end{aligned}
\end{equation}
\end{small}
From the above we know the typical size of $\widetilde{\Lambda}$ is $N^{-\frac{1}{2}}$.

Combining \eqref{fTY}, \eqref{Talphaexpansion1}, \eqref{Talphaexpansion5}, \eqref{Tuestimate}-\eqref{diagonal1} and \eqref{finY}, we have
\begin{equation}\label{fTYexpanNon0}
f(T,Y,Q)-f_0\big(\tau;\big\{\frac{\tau c_{\alpha}}{f_{\alpha}+t_0}\mathbb{I}_n\big\},\sqrt{t_0}U_1U_2,0\big)
=F_0+F_1+O(F_2),
\end{equation}
where
\begin{equation*}
\begin{aligned}
F_0&=\frac{N^{-\frac{1}{2}}}{\tau}\left(
\overline{z_0}
{\rm Tr}\big(
\hat{Z}
\big)+
z_0
{\rm Tr}\big(
\hat{Z}^*
\big)
\right) +\frac{n^2}{N}       \\
&-\frac{N^{-1}}{2}
\sum_{\alpha=1}^t\frac{c_{\alpha}}{(f_{\alpha}+t_0)^2}
\left(
\overline{z_0-a_{\alpha}}^2{\rm Tr}\big(
\hat{Z}^2
\big)+
(z_0-a_{\alpha})^2{\rm Tr}\big(
\hat{Z}^*
\big)^2
\right)  ,
\end{aligned}
\end{equation*}
\begin{small}
\begin{equation}\label{f1Non0}
\begin{aligned}
&F_1=\frac{|z_0|^2+t_0}{\tau}{\rm Tr}(\Gamma_1)
-\frac{1}{\tau}
{\rm Tr}\Big(Q_{t+1}
\Big(\big(
z_0\mathbb{I}_{r_{t+1}}-A_{t+1}
\big)^*
\big(
z_0\mathbb{I}_{r_{t+1}}-A_{t+1}
\big)+t_0\mathbb{I}_{r_{t+1}}
\Big)
Q_{t+1}^*\Big)
  \\
&-
\frac{t_0}{\tau}{\rm Tr}(Q_{0}Q_0^*)
-\sum_{\alpha=2}^t\frac{(f_{\alpha}+t_0)^2}{2\tau^2 c_{\alpha}}{\rm Tr}\big(\widetilde{T}_{{\rm d},\alpha}^2  \big)        
 -\frac{(f_{1}+t_0)^2}{2\tau^2 c_{1}}{\rm Tr}
\Big(
\sum_{\alpha=2}^t
\widetilde{T}_{{\rm d},\alpha}
 \Big)^2
\\&
+\frac{N^{-\frac{1}{2}}}{\tau}\sum_{\alpha=2}^t{\rm Tr}\Big(
\big(\overline{a_{\alpha}-a_1}\hat{Z}
+(a_{\alpha}-a_1)\hat{Z}^*\big)
 \widetilde{T}_{{\rm d},\alpha}
\Big)
-\sum_{\alpha=2}^t
\frac{f_{\alpha}+t_0}{\tau}{\rm Tr}\big(
T_{{\rm u},\alpha}T_{{\rm u},\alpha}^*
\big)\\&
-\frac{(f_1+t_0)^2}{\tau^2 c_1}
 {\rm Tr}\Big(
 \sum_{\alpha=2}^t
 \sqrt{\frac{\tau c_{\alpha}}{f_{\alpha}+t_0}}T_{{\rm u},\alpha}
\Big)
 \Big(
\sum_{\alpha=2}^t
 \sqrt{\frac{\tau c_{\alpha}}{f_{\alpha}+t_0}}T_{{\rm u},\alpha}
\Big)^*
 \\&+\frac{1}{\tau}\sum_{1\leq i < j\leq n}
 \Big|
\sum_{\alpha=2}^t
\sqrt{\frac{\tau c_{\alpha}}{f_{\alpha}+t_0}}
(a_{\alpha}-a_1)
t_{i,j}^{(\alpha)}
 \Big|^2+\frac{t_0}{N}P_1{\rm Tr}\big(
 U_1U_2\hat{Z}U_2^*U_1^*\hat{Z}^*
 \big)   \\
 &-\frac{P_1}{2} {\rm Tr}\big(
\widetilde{\Lambda}^2
\big)+N^{-\frac{1}{2}}\Big(
\overline{P_0}{\rm Tr}\big(
U_2^*\widetilde{\Lambda}U_2\hat{Z}
\big)+P_0{\rm Tr}\big(
U_1\widetilde{\Lambda}U_1^*\hat{Z}^*
\big)
\Big),
\end{aligned}
\end{equation}
\end{small}
and
\begin{small}
\begin{equation*}
\begin{aligned}
F_2=\big(
N^{-\frac{1}{2}}+\| \widetilde{\Lambda} \|
\big)^3
+ \Omega_{{\rm error}}^3+
N^{-\frac{1}{2}}\big(
\|\Gamma_1\|+\Omega_{{\rm error}}^2
\big)+\big(
\|\Gamma_1\|+\|G_1\|
\big)\Omega_{{\rm error}}.
\end{aligned}
\end{equation*}
\end{small}
From \eqref{Omegaerror} and \eqref{change scale} we see that the typical size of $F_2$ is $N^{-\frac{3}{2}}$, so $O(F_2)$ is the error term.
 
{\bf Step 2: Taylor expansion of $g(T,Y,Q)$.} For $g(T,Y,Q)$ defined in \eqref{gYUT},  we write down  leading terms  for five relevant  factors while the estimate for  the last determinant   in \eqref{gYUT}  is  non-trivial. Note $R_{\alpha,N}$ is defined in \eqref{ralpha N}.
 \begin{itemize}
 \item[(1)]For the first factor in  \eqref{gYUT},  by \eqref{matrix transformations1} and \eqref{matrix transformations2},
\begin{small}
 \begin{equation}\label{detTQ}
\Big(
\det\big(
\mathbb{I}_n-\sum_{\alpha=1}^tT_{\alpha}T_{\alpha}^*
-Q_0Q_0^*-Q_{t+1}Q_{t+1}^*
\big)
\Big)^{R_0-n}=\Big(\det\big(
-\Gamma_1-G_1-G_1^*
\big)\Big)^{R_0-n},
\end{equation}
\end{small}
  \item[(2)]For the second factor in  \eqref{gYUT}, by \eqref{zi}, \eqref{singularcorre} and \eqref{tildeLambda},
 \begin{equation}\label{detZY}
\det\begin{bmatrix}
\sqrt{\gamma_N}Z & -Y^*   \\
Y  &   \sqrt{\gamma_N}Z^*
\end{bmatrix}=
(|z_0|^{2}+t_0)^n+O\big(
N^{-\frac{1}{2}}+\| \widetilde{\Lambda} \|
\big).
\end{equation} 
  \item[(3)]For the third factor in  \eqref{gYUT} with $\alpha=1,\cdots,t,$, it is easy to see from \eqref{Tdalpha1} and \eqref{Omegaerror} that
\begin{small}
 \begin{equation}\label{tjjalpha}
 \prod_{\alpha=1}^t\prod_{j=1}^n
\big(
t_{j,j}^{(\alpha)}
\big)^{R_{\alpha,N}-j}=
\prod_{\alpha=1}^t\prod_{j=1}^n
\Big(
   \frac{\tau c_{\alpha}}{f_{\alpha}+t_0}
+O\big(\Omega_{\rm error}
\big)\Big)^{R_{\alpha,N}-j}.
\end{equation}
\end{small}
 \item[(4)]For the fourth factor in  \eqref{gYUT}, by \eqref{zi} and \eqref{A alpha}
 \begin{equation}\label{detAalpha}
\big(\det(A_{\alpha})\big)^{R_{\alpha,N}-n}=
\Big((f_{\alpha}+t_0)^n
+O\big(
N^{-\frac{1}{2}}+\| \widetilde{\Lambda} \|
\big)\Big)^{R_{\alpha,N}-n}.
\end{equation}

  \end{itemize}
  
The remaining part of this subsection  is devoted to    the last determinant   in \eqref{gYUT}.

Recalling $\hat{L}_1$ and $\hat{L}_2$ defined in    \eqref {L1hat} and \eqref{L0hat}, since all $a_{\alpha}\neq z_0$ and $t_0>0$, $A_{{\alpha}}$ is  invertible when $N$ is sufficiently large. Rewrite
\begin{equation*}
\det\!\bigg(
\widehat{L}_1+\sqrt{\gamma_N}\widehat{L}_2
\bigg)=\left[\begin{smallmatrix}
P_{1,1} & O\big( \|\widetilde{Q}_{t+1}\| \big)   \\
O\big( \|\widetilde{Q}_{t+1}\| \big) &
\widehat{B}_{t+1}+O\big( \|\widetilde{Q}_{t+1}\|^2 \big)
 \end{smallmatrix}
\right],
\end{equation*}
where
\begin{small}
\begin{equation*}
P_{1,1}=\left[\begin{smallmatrix}
A_1\otimes \mathbb{I}_n &&\\
& \ldots & \\
&& A_t\otimes \mathbb{I}_n
 \end{smallmatrix}
\right]+
\sqrt{\gamma_N}
\left[\begin{smallmatrix}
\mathbb{I}_{2n}\otimes T_1^*\\
\vdots \\
 \mathbb{I}_{2n}\otimes T_t^* 
 \end{smallmatrix}
\right]
\left[\begin{smallmatrix}
\left[\begin{smallmatrix}
a_1\mathbb{I}_n & \\ & \overline{a}_{1}\mathbb{I}_n
\end{smallmatrix}\right]\otimes T_1 &
,\cdots, &
\left[\begin{smallmatrix}
a_{t}\mathbb{I}_n & \\ & \overline{a}_{t}\mathbb{I}_n
\end{smallmatrix}\right]\otimes T_t
\end{smallmatrix}\right],
\end{equation*}
\end{small}
change the order of the above matrix product and  we obtain 
\begin{small}
 \begin{equation*}
\begin{aligned}
&\det(P_{1,1})=\Big(\prod_{\alpha=1}^t \det\big( A_{\alpha} \big) \Big)^n
\det\bigg(
\mathbb{I}_{2n^2}+\sqrt{\gamma_N} 
\sum_{\alpha=1}^t
\left( A_{\alpha}^{-1} \left[\begin{smallmatrix}
a_{\alpha}\mathbb{I}_n & \\ & \overline{a}_{\alpha}\mathbb{I}_n
\end{smallmatrix}\right] \right) \otimes
(T_{\alpha}T_{\alpha}^*)
\bigg),
\end{aligned}
\end{equation*}
\end{small}
From \eqref{singularcorre} and \eqref{tildeLambda}, we have $ Y=\sqrt{t_0}U_1U_2+O(\| \widetilde{\Lambda} \|), $ in view of \eqref{A alpha}, rewrite
\begin{small}
\begin{equation*}\label{A alpha rewrite}
A_{\alpha}=\begin{bmatrix}
Z_{\alpha} & -Y^* \\ Y & Z_{\alpha}^*
\end{bmatrix},\quad
Z_{\alpha}=\sqrt{\gamma_N}
(Z-a_{\alpha}\mathbb{I}_n),
\end{equation*} 
\end{small}
then
\begin{small}
\begin{equation*}\label{tildeA alpha inverse}
\begin{aligned}
A_{\alpha}^{-1}
=\begin{bmatrix}
\Big(
Z_{\alpha}+Y^*\big(Z_{\alpha}^*\big)^{-1}Y
\Big)^{-1}
&
Z_{\alpha}^{-1}Y^*
\big( Z_{\alpha}^*+YZ_{\alpha}^{-1}Y^* \big)^{-1}
\\
-\big(Z_{\alpha}^*\big)^{-1}Y
\Big(
Z_{\alpha}+Y^*\big(Z_{\alpha}^*\big)^{-1}Y
\Big)^{-1}
&
\big( Z_{\alpha}^*+YZ_{\alpha}^{-1}Y^* \big)^{-1}
\end{bmatrix}.
\end{aligned}
\end{equation*}
\end{small}
Simple calculations show that
\begin{small}
\begin{equation*}
\Big(
Z_{\alpha}+Y^*\big(Z_{\alpha}^*\big)^{-1}Y
\Big)^{-1}=\overline{z_0-a_{\alpha}}(f_{\alpha}+t_0)^{-1}\mathbb{I}_n
+O\big( N^{-\frac{1}{2}}+\| \widetilde{\Lambda} \| \big),
\end{equation*}
\begin{equation*}
T_{\alpha}T_{\alpha}^*=\frac{\tau c_{\alpha}}{f_{\alpha}+t_0}\mathbb{I}_n
+O\big( 
\Omega_{\rm error}
 \big).
\end{equation*}
\end{small}
Rewrite $\frac{a_{\alpha}}{z_0-a_{\alpha}}=
\frac{a_{\alpha}\overline{z_0-a_{\alpha}}}{f_{\alpha}}$, we have
\begin{small}
\begin{equation*}
\begin{aligned}
\mathbb{I}_{2n^2}
&+\sqrt{\gamma_N} 
\sum_{\alpha=1}^t
\left( A_{\alpha}^{-1} \left[\begin{smallmatrix}
a_{\alpha}\mathbb{I}_n & \\ & \overline{a}_{\alpha}\mathbb{I}_n
\end{smallmatrix}\right] \right) \otimes
(T_{\alpha}T_{\alpha}^*)=
\\&
\tau
\begin{bmatrix}
(t_0P_1-z_0\overline{P}_0)
\mathbb{I}_{n^2} &  \sqrt{t_0}(\overline{P}_0+\overline{z}_0P_1)
((U_2^*U_1^*)\otimes \mathbb{I}_n)
\\
-\sqrt{t_0}(P_0+z_0P_1)
((U_1U_2)\otimes \mathbb{I}_n)
& (t_0P_1-\overline{z}_0P_0)
\mathbb{I}_{n^2}
\end{bmatrix}+O\big(  N^{-\frac{1}{2}}+\Omega_{\rm error}+\| \widetilde{\Lambda} \|    \big),
\end{aligned}
\end{equation*}
\end{small}
where $P_0$ and $P_1$ are defined in \eqref{parameter2}. Also $|t_0P_1-\overline{z}_0P_0|^2+t_0|P_0+z_0P_1|^2=(|z_0|^2+t_0)(|P_0|^2+t_0P_1^2).$ Combining \eqref{detAalpha} we have
\begin{equation}\label{detsum tildeAalpha}
\det\big(  
P_{1,1}
\big)   
=\tau^{2n^2}(|z_0|^2+t_0)^{n^2}(|P_0|^2+t_0P_1^2)^{n^2}\prod_{\alpha=1}^t (f_{\alpha}+t_0)^{n^2}+
O\big( 
\Omega_{\rm error}
+N^{-\frac{1}{2}}+\| \widetilde{\Lambda} \|
 \big).
\end{equation} 
As $t_0>0$, we have $ \det(P_{1,1})\not=0. $
and from $\widetilde{Q}_{t+1}=[Q_0,Q_{t+1}]$, $
\|\widetilde{Q}_{t+1}\|^2=O\big(
\Omega_{\rm error}
\big).
$ Therefore,
\begin{equation*}
\det\!\bigg(
\widehat{L}_1+\sqrt{\gamma_N}\widehat{L}_2
\bigg)=\det(P_{1,1})\det\Big(
\widehat{B}_{t+1}+O\big(
\|\widetilde{Q}_{t+1}\|^2
\big)
\Big).
\end{equation*}
In view of \eqref{B0hat} and \eqref{Q convenience}, from elementary computations,
\begin{small}
\begin{equation}\label{detP22}
\begin{aligned}
\det\Big(
\widehat{B}_{t+1}+O\big(
\|\widetilde{Q}_{t+1}\|^2
\big)
\Big)= t_0^{nr_0}
 \Big(
\det\big(
&(z_0\mathbb{I}_{r_{t+1}}-A_{t+1})^*(z_0\mathbb{I}_{r_{t+1}}-A_{t+1})
+t_0\mathbb{I}_{r_{t+1}}
\big)
\Big)^n
\\&+O\big( 
\Omega_{\rm error}
+N^{-\frac{1}{2}}+\| \widetilde{\Lambda} \|
 \big),
 \end{aligned}
\end{equation}
\end{small}
Combing \eqref{detsum tildeAalpha}-\eqref{detP22}, we have
\begin{small}
 \begin{equation}\label{detMexpan}
\begin{aligned}
&\det\!\bigg(
\widehat{L}_1+\sqrt{\gamma_N}\widehat{L}_2
\bigg)=\tau^{2n^2}(|z_0|^2+t_0)^{n^2}(|P_0|^2+t_0P_1^2)^{n^2}
t_0^{nr_0}\prod_{\alpha=1}^t
(f_{\alpha}+t_0)^{n^2}
\\&\times \Big(
\det\big(
(z_0\mathbb{I}_{r_{t+1}}-A_{t+1})^*(z_0\mathbb{I}_{r_{t+1}}-A_{t+1})
+t_0\mathbb{I}_{r_{t+1}}
\big)
\Big)^n
 +O\big( 
\Omega_{\rm error}
+N^{-\frac{1}{2}}+\| \widetilde{\Lambda} \|
 \big)  
 ,
\end{aligned}
\end{equation}
\end{small}

Now recalling \eqref{gYUT}, combining \eqref{detTQ}-\eqref{detAalpha} and \eqref{detMexpan} we have
\begin{small}
\begin{equation}\label{gTYQexpansion}
\begin{aligned}
&g(T,Y,Q)=\Big(\det\big(
-\Gamma_1-G_1-G_1^*
\big)\Big)^{R_0-n}
\bigg((|z_0|^{2}+t_0)^{nR_0}
\tau^{n\sum_{\alpha=1}^tR_{\alpha,N}-\frac{n(n+1)}{2}t+2n^2}
\\&
\times 
(|P_0|^2+t_0P_1^2)^{n^2}
t_0^{nr_0}\prod_{\alpha=1}^t \Big(
c_{\alpha}^{nR_{\alpha,N}-\frac{(n+1)n}{2}}
(f_{\alpha}+t_0)^{\frac{(n+1)n}{2}}\Big)
\\&\times
\Big(\det\big(
(z_0\mathbb{I}_{r_{t+1}}-A_{t+1})^*(z_0\mathbb{I}_{r_{t+1}}-A_{t+1})
+t_0\mathbb{I}_{r_{t+1}}
\big)
\Big)^n
 +O\big( 
\Omega_{\rm error}
+N^{-\frac{1}{2}}+\| \widetilde{\Lambda} \|
 \big)\bigg).
\end{aligned}
\end{equation}
\end{small}

{\bf Step 3: Removal of error terms: A standard procedure of Laplace method.} Finally, with those preparations  in {\bf Step 1} and {\bf Step 2}, we will  give a  compete  proof of Theorem \ref{2-complex-correlation}.
 
Recalling \eqref{INdelta}, \eqref{Jacobiandet} and \eqref{fTYexpanNon0}, we rewrite
 \begin{equation}\label{INdeltaexpan}
e^{-NF_0}I_{N,\delta}=J_{1,N}+J_{2,N},
\end{equation}
 where 
\begin{small}
 \begin{equation}\label{J1N}
J_{1,N}=
\frac{\pi^{n^2}}{\big( \prod_{i=1}^{n-1}i! \big)^2}
\Big(
\frac{f_{1}+t_0}{\tau c_{1}}
\Big)^{\frac{n(n-1)}{2}}\int_{\hat{\Omega}_{N,\delta}}
g(T,Y,Q)\big(1+O\big(
\Omega_{\rm error}
\big)\big)e^{NF_1}
\prod_{1\leq i<j\leq n}(\widetilde{\lambda}_{j}-\widetilde{\lambda}_{i})^2
{\rm d}\widetilde{V},
\end{equation}
\end{small}
and
\begin{small}
 \begin{equation}\label{J2N}
J_{2,N}=
\frac{\pi^{n^2}}{\big( \prod_{i=1}^{n-1}i! \big)^2}
\Big(
\frac{f_{1}+t_0}{\tau c_{1}}
\Big)^{\frac{n(n-1)}{2}}\int_{\hat{\Omega}_{N,\delta}}
g(T,Y,Q)\big(1+O\big(
\Omega_{\rm error}
\big)\big)e^{NF_1}
\prod_{1\leq i<j\leq n}(\widetilde{\lambda}_{j}-\widetilde{\lambda}_{i})^2
\big( e^{O(NF_2)}-1 \big){\rm d}\widetilde{V},
\end{equation}
\end{small}
with $Y=U_1\sqrt{t_0\mathbb{I}_n+\widetilde{\Lambda}}U_2,$
\begin{equation}\label{widetildeV}
{\rm d}\widetilde{V}={\rm d}\widetilde{\Lambda}
{\rm d}U_1{\rm d}U_2
{\rm d}Q_0{\rm d}Q_{t+1}{\rm d}\Gamma_1
{\rm d}G_1\prod_{\alpha=2}^t {\rm d}\widetilde{T}_{{\rm d},\alpha}
{\rm d}T_{{\rm u},\alpha}
\end{equation}
and 
\begin{equation}\label{hat Omega N delta}
\hat{\Omega}_{N,\delta}=A_{N,\delta}\cap \big\{
\Gamma_1+G_1+G_1^*\leq 0
\big\}\cap \big\{
\widetilde{\lambda}_n\leq \cdots \leq \widetilde{\lambda}_1
\big\},
\end{equation}
 cf.  \eqref{ANdelta} for definition of $A_{N,\delta}$.
 
Under  the restriction condition  of $\Gamma_1+G_1+G_1^* \leq 0$,  every   principal minor  of order 2 is non-positive definite, so all diagonal entries of $\Gamma_1$  are zero or negative and  
\begin{equation}\label{g2hatU1 estimation}
{\rm Tr}\big(
G_1G_1^*
\big)=\sum_{1\leq i<j\leq n}\big| (G_1)_{i,j} \big|^2
\leq  \sum_{1\leq i<j\leq n} (\Gamma_1)_{i,i} (\Gamma_1)_{j,j}
\leq  
\big(  {\rm Tr}
(\Gamma_1)
\big)^2.
\end{equation}
Noticing the absence of $ G_1$ in $F_1$ given  in \eqref{f1Non0}, for convergence we need to control  $G_1$ by $
\Gamma_1$  in $F_2$,  just as shown in  \eqref{g2hatU1 estimation}. Since  $\| G_1 \|= O(\| \Gamma_1 \|)$ from   \eqref{g2hatU1 estimation}, also from \eqref{regiondelta} and \eqref{ANdelta} for sufficiently large $N$ and small $\delta$, there exists some $C>0$ such that 
  \begin{small}
\begin{equation*}
\begin{aligned}
&\frac{1}{C} NF_2 \leq N^{-\frac{1}{2}}+\sqrt{\delta}+
\sqrt{N\delta} \big(
\| \widetilde{\Lambda} \|+\|\widetilde{T}_{{\rm d}} \|_2
+\| Q_0 \|+\| Q_{t+1} \|+\|T_{{\rm u}} \|_2
\big)            \\
&+N\sqrt{\delta}\big(
\| Q_0 \|^2+\| \widetilde{\Lambda} \|^2
+\|\widetilde{T}_{{\rm d}} \|_2^2+\|T_{{\rm u}} \|_2^2
+\| Q_{t+1} \|^2+\| \Gamma_1 \|
\big).
\end{aligned}
\end{equation*}
\end{small}

 Using the inequality \begin{equation}\label{J2N inequ}
\big|
e^{O(NF_2)}-1
\big|\leq O(N|F_2|)e^{O(N|F_2|)},
\end{equation}
 after change of  variables
\begin{equation}\label{change scale}
\begin{aligned}
&(G_1,\Gamma_1)\rightarrow N^{-1}(G_1,\Gamma_1),\quad
(Q_0,Q_{t+1},\widetilde{\Lambda})\rightarrow
 N^{-\frac{1}{2}}(Q_0,Q_{t+1},\widetilde{\Lambda}),
\\
&\big(
\widetilde{T}_{{\rm d},\alpha},T_{{\rm u},\alpha}
\big)\rightarrow N^{-\frac{1}{2}} 
\big(
\widetilde{T}_{{\rm d},\alpha},T_{{\rm u},\alpha}
\big)
\quad \alpha=2,\cdots,t,
\end{aligned}
\end{equation}
the term $O(NF_2)$ in \eqref{J2N inequ} has an upper bound  by   $N^{-\frac{1}{2}}P(\tilde{V})$ for some polynomial of variables.  
since $F_1$ can control $F_2$  for sufficiently  small $\delta$, by the argument of  Laplace method and the dominant convergence theorem we know that 
$J_{2,N}$ is typically of order  $N^{-\frac{1}{2}}$ compared with $J_{1,N}$, that is,  
 \begin{equation}\label{J2Nestimation}
J_{2,N}=O\big(
N^{-\frac{1}{2}}
\big)J_{1,N}.
\end{equation}
For $J_{1,N}$,   
take  a large  $M_0>0$ such that
\begin{small}
\begin{equation*}
\begin{aligned}
A_{N,\delta}^{\complement}\subseteq  
&   \bigcup_{\alpha=2}^t
\Big\{
{\rm Tr}\big(
T_{{\rm u},\alpha}T_{{\rm u},\alpha}^*
\big)>\frac{\delta}{M_0}
\Big\}
\bigcup
\bigcup_{\alpha=2}^t
\Big\{
{\rm Tr}\big(
\widetilde{T}_{{\rm d},\alpha}\widetilde{T}_{{\rm d},\alpha}^*
\big)>\frac{\delta}{M_0}
\Big\} \bigcup \Big\{{\rm Tr}\big(
\Gamma_{1}\Gamma_{1}^*
\big)>\frac{\delta}{M_0}\Big\}
        \\
&\bigcup \Big\{{\rm Tr}\big(
Q_0Q_0^*
\big)>\frac{\delta}{M_0}\Big\} 
 \bigcup
\Big\{
{\rm Tr}\big(
\widetilde{\Lambda}^2
\big)>\frac{\delta}{M_0}
\Big\}
\bigcup
\Big\{
{\rm Tr}\big(
Q_{t+1}Q_{t+1}^*
\big)>\frac{\delta}{M_0}
\Big\}.
\end{aligned}
\end{equation*}
\end{small}
Here we have used \eqref{g2hatU1 estimation} to drop out the domain   
$ \{{\rm Tr}\big(
G_1G_1^*
\big)>\delta/M_0\}$.   For each piece of  domain,  only  keep  the restricted matrix variable and let  the others free,  it's easy to prove that the corresponding  matrix integral  is exponentially  small, that is,  $O\big( e^{-\delta_1 N} \big)
$
for some $\delta_1>0$.  

So we can extend the integration  region from $\hat{\Omega}_{N,\delta}$ to  $\big\{ \Gamma_1+G_1+G_1^* \leq 0 \big\}\cap \big\{
\widetilde{\lambda}_n\leq \cdots \leq \widetilde{\lambda}_1
\big\}$. From \eqref{ralpha N} and \eqref{calpha sum}, we have $\sum_{\alpha=1}^t R_{\alpha,N}=-r_0-r_{t+1}-R_0$, and by  the change of  variables    
\eqref{change scale} we have
\begin{equation}\label{J1Nform}
\begin{aligned}
J_{1,N}&=N^{-\frac{n^2 t}{2}-n(R_0+r_0+r_{t+1})}
\frac{\pi^{n^2}}{\big( \prod_{i=1}^{n-1}i! \big)^2}
\Big(
\frac{f_1+t_0}{\tau c_1}
\Big)^{\frac{n(n-1)}{2}}
\tau^{2n^2-n(r_0+r_{t+1}+R_0)-\frac{n(n+1)}{2}t }
       \\
&\times 
(|P_0|^2+t_0P_1^2)^{n^2}
t_0^{nr_0}
 \Big(
\det\big(
(z_0\mathbb{I}_{r_{t+1}}-A_{t+1})^*(z_0\mathbb{I}_{r_{t+1}}-A_{t+1})
+t_0\mathbb{I}_{r_{t+1}}
\big)
\Big)^n \\
&\times  (|z_0|^2+t_0)^{n^2}   
\prod_{\alpha=1}^t c_{\alpha}^{nR_{\alpha,N}-\frac{n(n+1)}{2}}
(f_{\alpha}+t_0)^{\frac{n(n+1)}{2}}\Big(
I_0+O\big(
N^{-\frac{1}{2}}
\big)
\Big),
\end{aligned}
\end{equation}
where with $ \widehat{H}=\Gamma_1+G_1+G_1^*$,
\begin{equation}\label{I0}
I_0=\int_{\widehat{H}\leq 0}
\int_{\widetilde{\lambda}_n\leq \cdots \leq \widetilde{\lambda}_1}
\big(
\det(-\widehat{H})
\big)^{R_0-n}
\prod_{1\leq i<j\leq n}(\widetilde{\lambda}_{j}-\widetilde{\lambda}_{i})^2
e^F
{\rm d}\widetilde{V},
\end{equation}
\begin{small}
\begin{equation}\label{I0F}
\begin{aligned}
&F=\frac{|z_0|^2+t_0}{\tau}{\rm Tr}(\Gamma_1)
-\frac{1}{\tau}
{\rm Tr}\Big(Q_{t+1}
\Big(\big(
z_0\mathbb{I}_{r_{t+1}}-A_{t+1}
\big)^*
\big(
z_0\mathbb{I}_{r_{t+1}}-A_{t+1}
\big)+t_0\mathbb{I}_{r_{t+1}}
\Big)
Q_{t+1}^*\Big)
  \\
&-
\frac{t_0}{\tau}{\rm Tr}(Q_{0}Q_0^*)
-\sum_{\alpha=2}^t\frac{(f_{\alpha}+t_0)^2}{2\tau^2 c_{\alpha}}{\rm Tr}\big(\widetilde{T}_{{\rm d},\alpha}^2  \big)        
 -\frac{(f_{1}+t_0)^2}{2\tau^2 c_{1}}{\rm Tr}
\Big(
\sum_{\alpha=2}^t
\widetilde{T}_{{\rm d},\alpha}
 \Big)^2
\\&
+\frac{1}{\tau}\sum_{\alpha=2}^t{\rm Tr}\Big(
\big(\overline{a_{\alpha}-a_1}\hat{Z}
+(a_{\alpha}-a_1)\hat{Z}^*\big)
 \widetilde{T}_{{\rm d},\alpha}
\Big)
-\sum_{\alpha=2}^t
\frac{f_{\alpha}+t_0}{\tau}{\rm Tr}\big(
T_{{\rm u},\alpha}T_{{\rm u},\alpha}^*
\big)\\&
-\frac{(f_1+t_0)^2}{\tau^2 c_1}
 {\rm Tr}\Big(
 \sum_{\alpha=2}^t
 \sqrt{\frac{\tau c_{\alpha}}{f_{\alpha}+t_0}}T_{{\rm u},\alpha}
\Big)
 \Big(
\sum_{\alpha=2}^t
 \sqrt{\frac{\tau c_{\alpha}}{f_{\alpha}+t_0}}T_{{\rm u},\alpha}
\Big)^*
 \\&+\frac{1}{\tau}\sum_{1\leq i < j\leq n}
 \Big|
\sum_{\alpha=2}^t
\sqrt{\frac{\tau c_{\alpha}}{f_{\alpha}+t_0}}
(a_{\alpha}-a_1)
t_{i,j}^{(\alpha)}
 \Big|^2+t_0P_1{\rm Tr}\big(
 U_1U_2\hat{Z}U_2^*U_1^*\hat{Z}^*
 \big)   \\
 &-\frac{P_1}{2} {\rm Tr}\big(
\widetilde{\Lambda}^2
\big)+
\overline{P_0}{\rm Tr}\big(
U_2^*\widetilde{\Lambda}U_2\hat{Z}
\big)+P_0{\rm Tr}\big(
U_1\widetilde{\Lambda}U_1^*\hat{Z}^*
\big),
\end{aligned}
\end{equation}
\end{small}

{\bf Step 4: Matrix integrals and final proof.} To simplify $I_0$ further, we first integrate out $Q_0,Q_{t+1}$ and $T_{{\rm u},2},\cdots,T_{{\rm u},t}$,
\begin{small}
\begin{equation}\label{Qtadd1 integral}
\begin{aligned}
&\int
e^{-\frac{1}{\tau}{\rm Tr}\Big(Q_{t+1}
\Big(\big(
z_0\mathbb{I}_{r_{t+1}}-A_{t+1}
\big)^*
\big(
z_0\mathbb{I}_{r_{t+1}}-A_{t+1}
\big)+t_0\mathbb{I}_{r_{t+1}}
\Big)
Q_{t+1}^*\Big)}{\rm d}Q_{t+1} 
\\&       
=
\Big(\det
\Big(\big(
z_0\mathbb{I}_{r_{t+1}}-A_{t+1}
\big)^*
\big(
z_0\mathbb{I}_{r_{t+1}}-A_{t+1}
\big)+t_0\mathbb{I}_{r_{t+1}}
\Big)
\Big)^{-n}
(\pi\tau)^{n r_{t+1}},
\end{aligned}
\end{equation}
\end{small}
\begin{equation}\label{Qtadd2 integral}
\int
e^{-\frac{t_0}{\tau}{\rm Tr}\big(Q_{0}
Q_{0}^*\big)}{\rm d}Q_{0}       
=
(\pi\tau)^{n r_{0}}t_0^{-nr_0}.
\end{equation}
Calculate the gaussian integrals and we obtain
\begin{small}
\begin{equation}\label{galpha integral}
\begin{aligned}
&\int
\exp\Big\{
-\sum_{\alpha=2}^t
\frac{f_{\alpha}+t_0}{\tau}{\rm Tr}\big(
T_{{\rm u},\alpha}T_{{\rm u},\alpha}^*
\big)
-\frac{(f_1+t_0)^2}{\tau^2 c_1}
 {\rm Tr}\Big(
 \sum_{\alpha=2}^t
 \sqrt{\frac{\tau c_{\alpha}}{f_{\alpha}+t_0}}T_{{\rm u},\alpha}
\Big)
 \Big(
\sum_{\alpha=2}^t
 \sqrt{\frac{\tau c_{\alpha}}{f_{\alpha}+t_0}}T_{{\rm u},\alpha}
\Big)^*
 \\&+\frac{1}{\tau}\sum_{1\leq i < j\leq n}
 \Big|
\sum_{\alpha=2}^t
\sqrt{\frac{\tau c_{\alpha}}{f_{\alpha}+t_0}}
(a_{\alpha}-a_1)
t_{i,j}^{(\alpha)}
 \Big|^2
 \Big\}
 \prod_{\alpha=2}^t {\rm d}T_{{\rm u},\alpha}    
 =
 \big(
\det(A)
 \big)^{-\frac{n(n-1)}{2}}
 \pi^{\frac{n(n-1)}{2}(t-1)},
\end{aligned}
\end{equation}
\end{small}
where for $\alpha,\beta=2,\cdots,t$,
\begin{small}
\begin{equation*}
A_{\alpha,\beta}=\Big(
\frac{(f_1+t_0)^2}{\tau c_1}-(a_{\alpha}-a_1)\overline{a_{\beta}-a_1}
\Big)
\sqrt{\frac{c_{\alpha}c_{\beta}}{(f_{\alpha}+t_0)(f_{\beta}+t_0)}}
+\frac{1}{\tau}(f_{\alpha}+t_0)\delta_{\alpha,\beta}.
\end{equation*}
\end{small}
To compute $\det(A)$, first rewrite A as
\begin{small}
\begin{equation*}
A={\rm diag}\Big(\sqrt{\frac{c_{\alpha}}{f_{\alpha}+t_0}}\Big)_{\alpha=2}^t
\Big( \frac{(f_1+t_0)^2}{\tau c_1}\vec{1}^t\vec{1}+\widehat{A} \Big)
{\rm diag}\Big(\sqrt{\frac{c_{\beta}}{f_{\beta}+t_0}}\Big)_{\beta=2}^t,
\end{equation*}
\end{small}
where
\begin{small}
\begin{equation*}\label{A rewrite 4}
\vec{1}=(1,1,\cdots,1),\quad
\widehat{A}_{\alpha,\beta}=
\frac{(f_{\alpha}+t_0)^2}{\tau c_{\alpha}}\delta_{\alpha,\beta}-
(a_{\alpha}-a_1)\overline{a_{\beta}-a_1},
\quad
\alpha,\beta=2,\cdots,t.
\end{equation*}
\end{small}
Then 
\begin{small}
\begin{equation}\label{A rewrite 5}
\det(A)=\det\big( \widehat{A} \big)
\big( 1+\frac{(f_1+t_0)^2}{\tau c_1}\vec{1}\widehat{A}^{-1}\vec{1}^t \big)
\prod_{\alpha=2}^t\frac{c_{\alpha}}{f_{\alpha}+t_0}
\end{equation}
\end{small}
From basic computations, for $\alpha,\beta=2,\cdots,t$,
\begin{small}
\begin{equation}\label{Ahat inverse}
\begin{aligned}
&\big(\widehat{A}^{-1}\big)_{\alpha,\beta}
=\frac{\tau c_{\alpha}}{(f_{\alpha}+t_0)^2}\delta_{\alpha,\beta}+
\frac{\tau^2 c_{\alpha}c_{\beta}(a_{\alpha}-a_1)\overline{a_{\beta}-a_1}}
{(f_{\alpha}+t_0)^2(f_{\beta}+t_0)^2}
\Big( 1-\sum_{\alpha=2}^t
\frac{\tau c_{\alpha}}{(f_{\alpha}+t_0)^2}|a_{\alpha}-a_1|^2 \Big)^{-1}
,\\&
\det\big( \widehat{A} \big)=
\Big( 1-\sum_{\alpha=2}^t
\frac{\tau c_{\alpha}}{(f_{\alpha}+t_0)^2}|a_{\alpha}-a_1|^2 \Big)
\prod_{\alpha=2}^t
\frac{(f_{\alpha}+t_0)^2}{\tau c_{\alpha}}
.
\end{aligned}
\end{equation}
\end{small}
Recall the definition of $P_0$ and $P_1$ in \eqref{parameter2},
\begin{small}
\begin{equation*}\label{tildeA computation2}
\Big|
\sum_{\alpha=2}^t
\frac{c_{\alpha}}{(f_{\alpha}+t_0)^2}(a_{\alpha}-a_1)
\Big|^2
=\Big|
\sum_{\alpha=2}^t
\frac{c_{\alpha}}{(f_{\alpha}+t_0)^2}\big((a_{\alpha}-z_0)
-(a_{1}-z_0)
\big)
\Big|^2=
|P_0-(a_1-z_0)P_1|^2.
\end{equation*}
\end{small}
Also from $\sum_{\alpha=1}^t
\frac{\tau c_{\alpha}}{f_{\alpha}+t_0}=1,$ 
\begin{small}
\begin{equation*}\label{tildeA computation3}
\begin{aligned}
\sum_{\alpha=2}^t
\frac{c_{\alpha}}
{(f_{\alpha}+t_0)^2}|a_{\alpha}-a_1|^2&=
\sum_{\alpha=1}^t
\frac{c_{\alpha}}{(f_{\alpha}+t_0)^2}\big|(a_{\alpha}-z_0)
-(a_{1}-z_0)
\big|^2     \\
&=\frac{1}{\tau}-t_0P_1-\overline{a_1-z_0}P_0
-(a_1-z_0)\overline{P}_0+|a_1-z_0|^2P_1.
\end{aligned}
\end{equation*}
\end{small}
so from \eqref{Ahat inverse} and basic computations we obtain
\begin{small}
\begin{equation*}\label{tildeA computation4}
\big( 1+\frac{(f_1+t_0)^2}{\tau c_1}\vec{1}\widehat{A}^{-1}\vec{1}^t \big)
\Big( 1-\sum_{\alpha=2}^t
\frac{\tau c_{\alpha}}{(f_{\alpha}+t_0)^2}|a_{\alpha}-a_1|^2 \Big)
=\tau\frac{(f_1+t_0)^2}{c_1}\big( t_0P_1^2+|P_0|^2 \big).
\end{equation*}
\end{small}
Hence,
Combining \eqref{A rewrite 5} and \eqref{Ahat inverse} we obtain \begin{equation}\label{detAfinal}
\det(A)=\tau^{-(t-2)}\frac{f_1+t_0}{c_1}
\big( t_0P_1^2+|P_0|^2 \big)\prod_{\alpha=1}^t (f_{\alpha}+t_0).
\end{equation}

Secondly, to integrate out $G_1$ and $\Gamma_1$, we need the following proposition. 
\begin{proposition}\label{matrixintegral1}
For  a  non-positive definite Hermitian matrix  $H_n=\left[ h_{i,j} \right]_{i,j=1}^n$,  given an integer  $ r_0\ge n$,  then 
\begin{small}
\begin{equation}\label{matrixintegral1equ}
\int_{H_n\leq 0} \left( \det(H_n) \right)^{r_0-n} \prod_{i<j}^n{\rm d}h_{i,j}
=
 \pi^{\frac{n(n-1)}{2}}  ( (r_0-1)! )^{-n} \prod_{k=1}^n   (r_0-k)!  \prod_{j=1}^n  (h_{j,j})^{r_0-1}.
\end{equation}
\end{small}
\end{proposition}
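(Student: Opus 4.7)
The plan is to establish \eqref{matrixintegral1equ} by induction on the matrix size $n$, after a preliminary reduction to the positive semidefinite case. Substituting $K_n=-H_n\ge 0$ turns $h_{j,j}$ into $-k_{j,j}$ and $\det(H_n)$ into $(-1)^n\det(K_n)$, while leaving the complex Lebesgue measure $\prod_{i<j}\mathrm{d}k_{i,j}$ invariant; a short sign check using $(-1)^{n(r_0-1)}=(-1)^{n(n-1)}(-1)^{n(r_0-n)}$ shows the overall signs on the two sides match, so it suffices to prove the analogue of \eqref{matrixintegral1equ} with $K_n\ge 0$ replacing $-H_n$. The base case $n=1$ is immediate since the product over $i<j$ is empty and both sides collapse to $k_{1,1}^{r_0-1}$.

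For the inductive step I would partition the matrix as a $2\times 2$ block
\begin{equation*}
K_n=\begin{bmatrix}K_{n-1} & v\\ v^* & k_{n,n}\end{bmatrix},\qquad v=(k_{1,n},\dots,k_{n-1,n})^t\in\mathbb{C}^{n-1},
\end{equation*}
and perform the integration in two stages: first over the $v$-column with $K_{n-1}$ held fixed and (generically) positive definite, and then over the remaining upper-triangular entries of $K_{n-1}$. The Schur-complement identity gives $\det K_n=\det(K_{n-1})(k_{n,n}-v^*K_{n-1}^{-1}v)$, while positive semidefiniteness of $K_n$ enforces the elliptic constraint $v^*K_{n-1}^{-1}v\le k_{n,n}$. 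The complex-linear change of variables $v=K_{n-1}^{1/2}u$ has real Jacobian $\det(K_{n-1})$, and after passing to the radial variable $s=|u|^2$ the inner integral reduces to a standard beta integral evaluating to $\pi^{n-1}(r_0-n)!/(r_0-1)!\cdot k_{n,n}^{r_0-1}$. Collecting factors, the inner integration contributes $(\det K_{n-1})^{r_0-(n-1)}$ times this explicit constant and the factor $k_{n,n}^{r_0-1}$.

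At this point the remaining integral over $K_{n-1}$ has exponent $r_0-(n-1)$, so the inductive hypothesis applies verbatim with $n$ replaced by $n-1$ and the same $r_0$. A routine telescoping of the prefactors recovers exactly $\pi^{n(n-1)/2}((r_0-1)!)^{-n}\prod_{k=1}^n(r_0-k)!$ and the product $\prod_{j=1}^n k_{j,j}^{r_0-1}$, completing the induction. The one step that requires genuine care is the bookkeeping of the complex Jacobian — which produces $\det K_{n-1}$ rather than $\det K_{n-1}^{1/2}$, because the $2(n-1)$-real-dimensional Jacobian is the modulus squared of the complex determinant — together with the correct identification of the $v$-integration region as $\{v^*K_{n-1}^{-1}v\le k_{n,n}\}$; once these are settled the remainder is elementary manipulation of factorials.
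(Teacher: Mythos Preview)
Your proof is correct. The induction via the Schur complement, the change of variables $v=K_{n-1}^{1/2}u$ with real Jacobian $\det K_{n-1}$, and the reduction to the beta integral
\[
\int_{|u|^2\le k_{n,n}}(k_{n,n}-|u|^2)^{r_0-n}\,\mathrm{d}u
=\pi^{n-1}\,\frac{(r_0-n)!}{(r_0-1)!}\,k_{n,n}^{\,r_0-1}
\]
are all carried out correctly, and the telescoping of the constants is right (note $(n-1)+\tfrac{(n-1)(n-2)}{2}=\tfrac{n(n-1)}{2}$ and $(r_0-n)!\cdot\prod_{k=1}^{n-1}(r_0-k)!=\prod_{k=1}^{n}(r_0-k)!$). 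Your sign reduction to the positive semidefinite case is also fine since $n(n-1)$ is even. The only caveat is the boundary set where $K_{n-1}$ is singular, but you already flag this with ``generically positive definite'' and it has Lebesgue measure zero.

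As for comparison with the paper: the paper does not actually prove this proposition here --- it simply cites \cite[Proposition~A.2]{LZ23}. So there is no in-paper argument to compare against; your self-contained inductive proof is a clean and standard way to establish the identity.
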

For proof see \cite[Proposition A.2]{LZ23}. From this Proposition, recalling $\widehat{H}=\Gamma_1+G_1+G_1^*$, we have
\begin{small}
\begin{equation}\label{g1 integration}
\int_{\widehat{H}\leq 0}\big(
\det(-\widehat{H})
\big)^{R_0-n}{\rm d}G_1
=(-1)^{n(R_0-n)}\pi^{\frac{n(n-1)}{2}}
\frac{\prod_{i=1}^n(R_0-i)!\big( \det(\Gamma_1) \big)^{R_0-1}}
{\big((R_0-1)!\big)^n},
\end{equation}
\end{small}
from which
\begin{small}
\begin{equation}\label{Gamma integral}
(-1)^{n(R_0-n)}\int_{\Gamma_1\leq 0}
\big( \det(\Gamma_1) \big)^{R_0-1}
e^{\frac{|z_0|^2+t_0}{\tau}{\rm Tr}(\Gamma_1)}{\rm d}\Gamma_1
=\tau^{nR_0}(|z_0|^2+t_0)^{-nR_0}\big((R_0-1)!\big)^n.
\end{equation}
\end{small}
Thirdly, we need to integrate out $\widetilde{T}_{{\rm d},\alpha}$ for $\alpha=2,\cdots,t$. For $i=1,\cdots,n$, let
\begin{equation*}\label{bi}
\vec{b}_i=\big(
b_i^{(2)},b_i^{(3)},\cdots,b_i^{(t)}
\big),\quad
b_i^{(\alpha)}=
\big(
\overline{a_{\alpha}-a_1}\hat{Z}
+(a_{\alpha}-a_1)\hat{Z}^*
 \big)_{i,i},
\end{equation*}
and introduce a square matrix
\begin{equation*}  \label{Sigma}
\Sigma=[\Sigma_{\alpha,\beta}]_{\alpha,\beta=2}^t,  \quad \Sigma_{\alpha,\beta}=\frac{(f_{\alpha}+t_0)^2}{2c_{\alpha}}
\delta_{\alpha,\beta}+\frac{(f_{1}+t_0)^2}{2c_{1}},
\end{equation*} 
Noting
\begin{small}
\begin{equation}\label{Ainverse critical non0}
\det(\Sigma)=2P_1\prod_{\alpha=1}^t \frac{(f_{\alpha}+t_0)^2}{2c_{\alpha}}
,\quad
(\Sigma^{-1})_{\alpha,\beta}=
\frac{2c_{\alpha}\delta_{\alpha,\beta}}{(f_{\alpha}+t_0)^2}
-\frac{2c_{\alpha}c_{\beta}}{(f_{\alpha}+t_0)^2(f_{\beta}+t_0)^2P_1},
\quad
\alpha,\beta=2,\cdots,t,
\end{equation} 
\end{small}
where $P_1$ is given in \eqref{parameter2}. Calculate the gaussian integrals and we obtain
\begin{small}
\begin{equation}\label{Ualpha integral}
\begin{aligned}
&\int \exp
\Big\{
-\frac{(f_{1}+t_0)^2}{2\tau^2 c_{1}}{\rm Tr}
\Big(
 \sum_{\alpha=2}^t
 \widetilde{T}_{{\rm d},\alpha}
 \Big)^2
  -\sum_{\alpha=2}^t\frac{(f_{\alpha}+t_0)^2}{2\tau^2 c_{\alpha}}
  {\rm Tr} \big( \widetilde{T}_{{\rm d},\alpha}^2  \big)
  \\&     
+\frac{1}{\tau}\sum_{\alpha=2}^t{\rm Tr}\Big(
\big(
\overline{a_{\alpha}-a_1}\hat{Z}+
(a_{\alpha}-a_1)\hat{Z}^*
\big)
\widetilde{T}_{{\rm d},\alpha}
\Big)\Big\}
\prod_{\alpha=2}^t{\rm d}\widetilde{T}_{{\rm d},\alpha} \\
&=
\Big(
  2P_1\prod_{\alpha=1}^t \frac{(f_{\alpha}+t_0)^2}{2c_{\alpha}} 
  \Big)
 ^{-\frac{n}{2}}
 \pi^{\frac{t-1}{2}n}
 \tau^{(t-1)n}
e^{\frac{1}{4}\sum_{i=1}^n 
 \vec{b}_i \Sigma^{-1}\vec{b}_i^t}.
\end{aligned}
\end{equation}
\end{small}
Rewrite $$ \overline{a_{\alpha}-a_1}\hat{Z}+
(a_{\alpha}-a_1)\hat{Z}^*=\overline{a_{\alpha}-z_0}\hat{Z}+
(a_{\alpha}-z_0)\hat{Z}^*-\big( \overline{a_{1}-z_0}\hat{Z}+
(a_{1}-z_0)\hat{Z}^* \big), $$
obviously,
\begin{small}
\begin{equation*}
\begin{aligned}
&\sum_{i=1}^n\sum_{\alpha=2}^t
 \frac{c_{\alpha}\big(
 b_i^{(\alpha)}
\big)^2 }
 {(f_{\alpha}+t_0)^2}=
\sum_{\alpha=1}^t
 \frac{c_{\alpha} }
 {(f_{\alpha}+t_0)^2}{\rm Tr}\big(
\overline{a_{\alpha}-z_0}\hat{Z}+
(a_{\alpha}-z_0)\hat{Z}^*\big)^2
\\&
-2{\rm Tr}\Big(\big(
\overline{a_{1}-z_0}\hat{Z}+
(a_{1}-z_0)\hat{Z}^*\big)\big(
\overline{P}_0\hat{Z}+
P_0\hat{Z}^*\big)\Big)+P_1{\rm Tr}\big(
\overline{a_{1}-z_0}\hat{Z}+
(a_{1}-z_0)\hat{Z}^*\big)^2, 
\end{aligned}
\end{equation*}
\end{small}
\begin{small}
\begin{equation*}
\begin{aligned}
\sum_{i=1}^n
\Big(\sum_{\alpha=2}^t
 \frac{c_{\alpha}
 b_i^{(\alpha)} }
 {(f_{\alpha}+t_0)^2}\Big)^2=
{\rm Tr}\Big(
\overline{P}_0\hat{Z}+
P_0\hat{Z}^*-P_1\big(
\overline{a_{1}-z_0}\hat{Z}+
(a_{1}-z_0)\hat{Z}^*\big)
\Big)^2.
 \end{aligned}
\end{equation*}
\end{small}
Use \eqref{Ainverse critical non0}, elementary calculation gives us
\begin{small}
\begin{equation}\label{14biSigmainversebi}
\begin{aligned}
&\frac{1}{4}\sum_{i=1}^n
 \vec{b}_i \Sigma^{-1}\vec{b}_i^t
 =
\frac{1}{2}\sum_{i=1}^n\sum_{\alpha=2}^t
 \frac{c_{\alpha}\big(
 b_i^{(\alpha)}
\big)^2 }
 {(f_{\alpha}+t_0)^2}-\frac{1}{2P_1}\sum_{i=1}^n
 \Big(\sum_{\alpha=2}^t
 \frac{c_{\alpha}
 b_i^{(\alpha)} }
 {(f_{\alpha}+t_0)^2}\Big)^2
\\&=\frac{1}{2}\sum_{\alpha=1}^t
 \frac{c_{\alpha}}
 {(f_{\alpha}+t_0)^2}{\rm Tr}\big(
\overline{a_{\alpha}-z_0}\hat{Z}+
(a_{\alpha}-z_0)\hat{Z}^*\big)^2
-\frac{1}{2P_1}{\rm Tr}\big(
\overline{P}_0\hat{Z}+
P_0\hat{Z}^*
\big)^2,
 \end{aligned}
\end{equation} 
\end{small}

Finally, with \eqref{norm-1} and \eqref{DNn} in mind, by the Stirling's formula we can obtain
\begin{small}
\begin{equation}\label{eNf0 DNn}
\begin{aligned}
&e^{NF_0}D_{N,n}=\frac
{N^{\frac{n^2t}{2}+n(R_0+1+r_0+r_{t+1})}}
{\pi^{n(n+1+r_0+r_{t+1})+\frac{n(n-1)}{2}t}}
(2\pi)^{-\frac{n t}{2}}
\frac{\prod_{\alpha=1}^t c_{\alpha}^{-n R_{\alpha,N}+\frac{n^2}{2}}}
{\prod_{k=1}^n (R_0-k)!}\tau^{-\frac{n(n+1)}{2}}      \\
&\times \prod_{1\leq i<j\leq n}
\big| \hat{z}_i-\hat{z}_j \big|^2
e^{-\frac{1}{\tau}{\rm Tr}\big( \hat{Z}\hat{Z}^* \big)
-\frac{1}{2}\sum_{\alpha=1}^t  
\frac{c_{\alpha}}{(f_{\alpha}+t_0)^2}\Big(
\overline{z_0-a_{\alpha}}^2
{\rm Tr}\big( \hat{Z}^2 \big)+(z_0-a_{\alpha})^2
{\rm Tr}\big( \hat{Z}^* \big)^2
\Big)}
\big( 1+O(N^{-1}) \big).
\end{aligned}
\end{equation}
\end{small}
Now combining \eqref{RNndelta}, \eqref{INdeltaexpan}, \eqref{J2Nestimation}, \eqref{J1Nform}-\eqref{galpha integral}, \eqref{detAfinal}, \eqref{g1 integration}, \eqref{Gamma integral}, \eqref{Ualpha integral}, \eqref{14biSigmainversebi} and \eqref{eNf0 DNn}, we can get
\begin{small}
\begin{equation}\label{RNn final form 1}
\begin{aligned}
&\frac{1}{N^n}
R_N^{(n)}\big(
\tau,A_0;z_0+N^{-\frac{1}{2}}\hat{z}_1
,\cdots,
z_0+N^{-\frac{1}{2}}\hat{z}_n
\big)
=\frac{\prod_{1\leq i<j\leq n}\big|\hat{z}_i-\hat{z}_j\big|^2}
{(\prod_{i=1}^{n-1}i!)^2(2\pi)^{\frac{n}{2}}\pi^n}
\big(t_0P_1^2+|P_0|^2 \big)^{\frac{n(n+1)}{2}}P_1^{-\frac{n}{2}}
\\
&\times 
e^{-\frac{1}{2P_1}{\rm Tr}\big(
\overline{P}_0\hat{Z}+
P_0\hat{Z}^*
\big)^2-t_0P_1{\rm Tr}\big( \hat{Z}\hat{Z}^* \big)}
\int_{\widetilde{\lambda}_n\leq \cdots \leq \widetilde{\lambda}_1}
\prod_{1\leq i<j\leq n}(\widetilde{\lambda}_{j}-\widetilde{\lambda}_{i})^2
\exp\Big\{
t_0P_1{\rm Tr}\big( U_1U_2\hat{Z}U_2^*U_1^*\hat{Z}^* \big)
\\&-\frac{P_1}{2} {\rm Tr}\big(
\widetilde{\Lambda}^2
\big)+
\overline{P_0}{\rm Tr}\big(
U_2^*\widetilde{\Lambda}U_2\hat{Z}
\big)+P_0{\rm Tr}\big(
U_1\widetilde{\Lambda}U_1^*\hat{Z}^*
\big)
\Big\}{\rm d}\widetilde{\Lambda}{\rm d}U_1{\rm d}U_2+O\big(
N^{-\frac{1}{2}}
\big).
\end{aligned}
\end{equation}
\end{small}
After basic computations, the exponential terms in integrand of \eqref{RNn final form 1} now becomes
\begin{small}
\begin{equation*}
\begin{aligned}
&\exp\Big\{
-\frac{P_1}{2}{\rm Tr}\big(
U_2^*\widetilde{\Lambda}U_2-\frac{1}{P_1}
\big(  
\overline{P}_0\hat{Z}+U_2^*U_1^*\hat{Z}^*U_1U_2 
 \big)
\big)^2
\\&+\big(  
t_0P_1+\frac{|P_0|^2}{P_1}
 \big){\rm Tr}\big(  
U_2^*U_1^*\hat{Z}^*U_1U_2 \hat{Z}
 \big)+\frac{\overline{P}_0^2}{2P_1}{\rm Tr}\big(  
\hat{Z}^2
 \big)+\frac{P_0^2}{2P_1}{\rm Tr}\big(  
\hat{Z}^*
 \big)^2
\Big\}.
\end{aligned}
\end{equation*}
\end{small}
Now take the matrix transformations $W=U_1U_2$, $U_2=U_2$, then we have 
${\rm d}U_1{\rm d}U_2={\rm d}W{\rm d}U_2$. And set $S=U_2^*\widetilde{\Lambda}U_2,$ with Jacobian $ {\rm d}S=\frac{\pi^{\frac{n(n-1)}{2}}}{\prod_{i=1}^{n-1}i!}\prod_{1\leq i<j\leq n}(\widetilde{\lambda}_{j}-
\widetilde{\lambda}_{i})^2{\rm d}\widetilde{\Lambda}{\rm d}U_2.  $

Now we can integrate out the Hermitian $S$ and the unitary $W$ matrix variables
\begin{small}
\begin{equation*}
\begin{aligned}
\int_{S=S^*} e^{-\frac{P_1}{2}{\rm Tr}\big(
S-\frac{1}{P_1}
\big(  
\overline{P}_0\hat{Z}+W^*\hat{Z}^*W 
 \big)
\big)^2}{\rm d}S=2^{-\frac{n(n-1)}{2}}\Big(
\frac{2\pi}{P_1}
\Big)^{\frac{n^2}{2}},
\end{aligned}
\end{equation*}
\end{small}
next, from Harish-Chandra-Itzykson-Zuber integration formula  \cite{HC,IZ}  (see also \cite{Me})
\begin{equation}\label{HCIZ}
\int_{\mathcal{U}(n)}\exp\!\left\{
l{\rm Tr}\left( AUBU^* \right)
\right\}{\rm d}U=l^{-\frac{n(n-1)}{2}}
\frac{\det\big( [ e^{l a_i b_j} ]_{i,j=1}^n \big)}
{\prod_{1\leq i<j\leq n}(a_{j}-a_{i}) (b_{j}-b_{i})} \prod_{i=1}^{n-1}i!,
\end{equation}
with  $A={\rm diag}\left( a_1,\cdots,a_n \right)$ and  $B={\rm diag}\left( b_1,\cdots,b_n \right)$, we obtain 
\begin{small}
\begin{equation*}
\begin{aligned}
&\int_{\mathcal{U}(n)}\exp\!\left\{
\big(  
t_0P_1+\frac{|P_0|^2}{P_1}
 \big){\rm Tr}\left( W^*\hat{Z}^*W\hat{Z} \right)
\right\}{\rm d}W=\big(  
t_0P_1+\frac{|P_0|^2}{P_1}
 \big)^{-\frac{n(n-1)}{2}}
\\&\times\prod_{i=1}^{n-1}i!
\prod_{1\leq i<j\leq n}\big|\hat{z}_i-\hat{z}_j\big|^{-2}
\det\Big(
\Big[
e^{\big(  
t_0P_1+\frac{|P_0|^2}{P_1}
 \big)\hat{z}_i\overline{\hat{z}_j}}
\Big]_{i,j=1}^n
\Big).
\end{aligned}
\end{equation*}
\end{small}
Finally, take the rescaling of $\hat{Z}$ by $
\hat{z}_i\rightarrow
\big(  
t_0P_1+\frac{|P_0|^2}{P_1}
 \big)^{-\frac{1}{2}}\hat{z}_i,\quad
 i=1,\cdots,n,
$ we can get to the conclusion.


\hspace*{\fill}

\hspace*{\fill}

 \noindent{\bf Acknowledgements}  
We would like to thank Elton P. Hsu for  his  encouragement and support, Dang-Zheng Liu for useful comments on this paper and Mohammed Osman for useful discussions around the issues.

%
%
%

\end{document}